\date{}
\allowdisplaybreaks \allowdisplaybreaks[2]
\numberwithin{equation}{section}
\newtheorem{theorem}{Theorem}[section]
\newtheorem{lemma}[theorem]{Lemma}
\newtheorem{proposition}{Proposition}
\newtheorem{corollary}[theorem]{Corollary}
\newtheorem{remark}{Remark}
\theoremstyle{definition}
\numberwithin{equation}{section}
\begin{document}

\title{Biconservative hypersurfaces with constant scalar curvature in space forms}

\author{Yu Fu}

\address[Fu]{School of Data Science and Artificial Intelligence, Dongbei University of Finance and Economics,
Dalian 116025, P. R. China}\email{yufu@dufe.edu.cn}

\author{Min-Chun Hong}
\address[Hong]{Department of Mathematics, The University of Queensland, Brisbane,
QLD 4072, Australia} \email{hong@maths.uq.edu.au}

\author{Dan Yang}
\address[Yang]{School of Mathematics, Liaoning University, Shenyang, 110036, China}
\email{dlutyangdan@126.com}

\author{Xin Zhan}
\address[Zhan]{School of Mathematics and Statistics, Changshu Institute of Technology,
SuZhou 215500, P. R. China} \email{zhanxin\_math@163.com}

\subjclass{Primary 53C40, 58E20; Secondary 53C42}



\keywords{Biconservative hypersurfaces, biharmonic hypersurfaces,
constant mean curvature, constant scalar curvature}

\begin{abstract}
Biconservative hypersurfaces are hypersurfaces which have
conservative stress-energy tensor with respect to the bienergy,
containing all minimal and constant mean curvature hypersurfaces.
The purpose of this paper is to study biconservative hypersurfaces
 $M^n$ with constant scalar curvature in a space form $N^{n+1}(c)$. We prove that every
biconservative hypersurface with constant scalar curvature in
$N^4(c)$ has constant mean curvature. Moreover, we prove that any
biconservative hypersurface with constant scalar curvature in
$N^5(c)$ is ether an open part of a certain rotational hypersurface
or a constant mean curvature hypersurface. These solve an open
problem proposed recently by D. Fetcu and C. Oniciuc for $n\leq4$.
\end{abstract}

\maketitle \markboth{Fu et al.} {Biconservative hypersurfaces with
constant scalar curvature in forms}

\section{Introduction}
In 1983, Eells and Lemaire \cite{Eells} introduced the biharmonic
map to classify maps between two Riemannian manifolds $(M^n,g)$ and
$(N^m,\widetilde{g})$. A biharmonic map
$\phi:(M^n,g)\longrightarrow(N^m,\widetilde{g})$ is defined as a
critical point of the bienergy functional
$E_2(\phi)=\frac{1}{2}\int_M|\tau(\phi)|^2dv_g $ where $\tau(\phi)$
is the tension field associated to $\phi$.  By computing the first
variational formulae, a biharmonic map $\phi$ is characterized by
the vanishing of the associated bitension field:
\begin{align}\label{Euler-Lagrange}
\tau_2(\phi):=-\Delta\tau(\phi)-{\rm trace}\,
R^{N}(d\phi,\tau(\phi))d\phi=0.
\end{align}
A submanifold $M^n$ of $N^m$ is called a \textit{biharmonic}  if the
isometric immersion $\phi$ that defines the biharmonic submanifold
is a biharmonic map. The study of biharmonic submanifolds has
attracted great attentions since then in geometry. Many important
research results have been carried out to investigate the existence
and classification problems of biharmonic submanifolds in some model
spaces. More details and an overview about the historic development
concerning biharmonic submanifolds can be found in
\cite{Ou-Chen-book2020, fetcu1} and the references therein.

During the study of  biharmonic submanifolds, Caddeo et al.
\cite{CMOP2014} introduced the concept of {\em biconservative
immersions} from the principle of a stress-energy tensor for the
bienergy. An isometric immersion
$\phi:(M^n,g)\longrightarrow(N^m,\widetilde{g})$ is said to be
\textit{biconservative} if its associated divergence of the
stress-bienergy tensor $S_2$  is zero.
 Note that
the biconservative immersion $\phi$ is given by the vanishing of the
tangent part of the bitension field (c.f.
\cite{CMOP2014,loubeau2008}).   Hence, biharmonic submanifolds are
automatically biconservative submanifolds.

In  \cite{CMOP2014}, Caddeo et al.  made a first contribution on the
classification problem of biconservative surfaces in 3-dimensional
Riemannian space forms $N^3(c)$. Soon after, the first author
\cite{Fu2015Lorentz} found a new biconservative surface in the
hyperbolic 3-space $\mathbb H^3$ and provided a complete explicit
classification of biconservative surfaces in 3-dimensional
Riemannian space forms $N^3(c)$. Later, Montaldo, Oniciuc and Ratto
\cite{montaldo3} studied biconservative surfaces in an arbitrary
Riemannian manifold; in particular, they found a remarkable property
that the Hopf differential associated to a biconservative surface in
an arbitrary Riemannian manifold is holomorphic if and only if the
surface has constant mean curvature. The local parametric equations
of biconservative hypersurfaces in 4-dimensional space forms were
obtained in \cite{Hasanis-Vlachos1995H, Turgay-Upadhyay2019}.
Furthermore, the global and uniqueness properties of biconservative
surfaces (or hypersurfaces) have been investigated in a series of
papers \cite{fetcu2, fetcu3, montaldo, nistor2, nistor3}.

On the other hand, the study of minimal hypersurfaces of constant
scalar curvature $R$ in the unit sphere $\mathbb S^{n+1}$ has
received great attentions in geometry. In 1960's, Simons
\cite{Simons1968}, Lawson \cite{Lawson1969} and Chern-do
Carmo-Kobayashi \cite{Chern1970} pioneered the study of minimal
hypersurfaces with constant scalar curvature in the unit sphere. In
particular, S. S. Chern  proposed a famous conjecture: {\it for a
closed minimal hypersurface with constant scalar curvature $R$ in
the unit sphere $\mathbb S^{n+1}$, the set of $R$ should be
discrete}. Later,  Verstraelen et al \cite{verstraelen1986}
reformulated  a stronger version of Chern's Conjecture as following:
{\em any closed minimal hypersurface in the unit sphere $\mathbb
S^{n+1}$ with constant scalar curvature is isoparametric}. In 1993,
Peng-Terng \cite{Peng-Terng1993} and Chang \cite{Chang1993-JDG}
solved Chern's Conjecture for $n=3$. Furthermore, Chang
\cite{Chang1993-CAG} proved that any closed hypersurface in the unit
sphere $\mathbb S^{4}$ with constant mean curvature (CMC) and
constant scalar curvature is isoparametric. Cheng and Wan
\cite{Cheng-Wan1993} showed that any complete hypersurface with
constant mean curvature and constant scalar curvature in a space
form $N^{4}(c)$ is isoparametric. For $n\geq4$, Chern's conjecture
remains open. Some important progress had been made recently, see
for examples \cite{Tang-Yan2020, Tang-Wei-Yan2020} and references
therein.

Since the class of biconservative hypersurfaces is a generalization
of minimal (or CMC) hypersurfaces, it is very interesting to
investigate whether Chern's conjecture holds for biconservative
 hypersurfaces  instead of using minimality (or CMC) for hypersurfaces in space
forms.   In their recent survey \cite{fetcu1}, Fetcu and Oniciuc
proposed a challenging problem on biconservative hypersurfaces as following:\\

\noindent{\bf Fetcu-Oniciuc's Problem}:~~Classify all biconservative
hypersurfaces with
constant scalar curvature in a space form $N^{n+1}(c)$.\\

For a non-CMC biconservative surface in $N^3(c)$, a relation between
the Gauss curvature $K$ and mean curvature $H$ holds that
$K=-3H^2+c$ (c.f.\cite{CMOP2014},\cite{montaldo}). Assuming that the
Gauss curvature is constant, it implies that the mean curvature $H$
is also constant. Therefore, any biconservative surface with
constant Gauss curvature (or constant scalar curvature) in $N^3(c)$
is CMC. Hence the problem of the case of $n=2$ was known.

In this paper, we firstly solve Fetcu-Oniciuc's problem for $n=3$:
\begin{theorem}\label{Theorem-1}
Any biconservative hypersurface with constant scalar curvature in
$N^4(c)$ has constant mean curvature.
\end{theorem}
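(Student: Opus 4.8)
The plan is to assume, for contradiction, that the mean curvature $H$ is non-constant, and to work on the nonempty open set $U=\{p\in M^3: \nabla H(p)\neq 0\}$. On $U$ the biconservative equation for a hypersurface in a space form, namely $A(\nabla H)=-\tfrac{n}{2}H\,\nabla H$ with $n=3$, shows that $e_1:=\nabla H/|\nabla H|$ is a principal direction with principal curvature $\lambda_1=-\tfrac32 H$. Completing $e_1$ to a local orthonormal frame $\{e_1,e_2,e_3\}$ of principal directions with $Ae_i=\lambda_i e_i$, I note that $H$ depends only on the integral curves of $e_1$, so $e_2(H)=e_3(H)=0$ and hence $e_2(\lambda_i)=e_3(\lambda_i)=0$ for every $i$, while $e_1(\lambda_i)=\lambda_i'\,e_1(H)$.

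First I exploit the constancy of the scalar curvature. The Gauss equation gives $R=6c+9H^2-\sum_i\lambda_i^2$ together with $\lambda_1+\lambda_2+\lambda_3=3H$; substituting $\lambda_1=-\tfrac32H$ yields the two fundamental identities $\lambda_2+\lambda_3=\tfrac92 H$ and $\lambda_2\lambda_3=\tfrac{27}{4}H^2-3c+\tfrac R2$, so the remaining principal curvatures are completely determined, as functions of $H$, by the roots of a quadratic whose coefficients are polynomials in $H$ (with $R,c$ constant). If $\lambda_2=\lambda_3$ on some open subset, then $\lambda_2=\lambda_3=\tfrac94H$ and comparing the two expressions for $\lambda_2\lambda_3$ forces $\tfrac{81}{16}H^2=\tfrac{27}{4}H^2-3c+\tfrac R2$, i.e. $H^2=\tfrac{16}{27}\!\left(3c-\tfrac R2\right)$ is constant, contradicting $\nabla H\neq0$. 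Thus on a dense open part of $U$ the three principal curvatures are mutually distinct, which is the genuinely hard case.

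For the distinct case I turn to the Codazzi equation $(\nabla_{e_i}A)e_j=(\nabla_{e_j}A)e_i$, valid since $N^4(c)$ has constant curvature. Expanding it with the connection coefficients $\Gamma_{ij}^k=\langle\nabla_{e_i}e_j,e_k\rangle$ and using $e_2(\lambda_i)=e_3(\lambda_i)=0$, the various components determine the $\Gamma_{ij}^k$ explicitly: several vanish, while those of the form $\Gamma_{1j}^j$ and $\Gamma_{j1}^1$ are expressed through $\lambda_i'$, $e_1(H)$, and the differences $\lambda_i-\lambda_j$. Parametrizing the integral curves of $e_1$ by arc length $s$, so that $\dot H=e_1(H)$, I then obtain an ODE system for $H(s)$, $\lambda_i(s)$ and the surviving connection coefficients. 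The crux of the argument, and the step I expect to be the main obstacle, is to extract a contradiction from the integrability (compatibility) conditions: computing a sectional curvature such as $K(e_1,e_2)$ in two ways, once via the Gauss equation $K(e_1,e_2)=c+\lambda_1\lambda_2$ and once intrinsically from the $\Gamma_{ij}^k$ and their $e_1$-derivatives, and likewise evaluating the brackets $[e_i,e_j]$ on the $\lambda_k$, produces an over-determined system. Because $\lambda_1,\lambda_2,\lambda_3$ are fixed functions of the single variable $H$ through the quadratic above, these relations collapse to incompatible ODEs for $H(s)$ unless $\dot H\equiv0$, giving the desired contradiction and showing that $H$ must be constant.
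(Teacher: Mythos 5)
Your setup and your treatment of the case $\lambda_2=\lambda_3$ agree with the paper (its Case 1): the relations $\lambda_2+\lambda_3=\tfrac92H$ and $\lambda_2\lambda_3=\tfrac{27}{4}H^2-3c+\tfrac{R}{2}$ are correct and do force $H$ to be constant there. The genuine gap is in the case of three distinct principal curvatures, which you yourself flag as ``the main obstacle'': you assert that the integrability conditions ``collapse to incompatible ODEs for $H(s)$ unless $\dot H\equiv0$,'' but you never establish this incompatibility, and it cannot be taken for granted. Indeed, in the directly analogous problem one dimension up the corresponding over-determined system is \emph{not} inconsistent: Theorem \ref{Theorem-2} exhibits non-CMC biconservative hypersurfaces with constant scalar curvature in $N^5(c)$. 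So the inconsistency for $n=3$ is a genuinely computational fact that must be exhibited, not inferred from over-determinacy.

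The paper closes exactly this gap by explicit elimination. From $e_1(\omega_{ii}^1)=(\omega_{ii}^1)^2+\lambda\lambda_i+c$ (the curvatures $K(e_1,e_i)$) it derives the two second-order relations \eqref{Tn36}--\eqref{Tn37} in $\lambda,\lambda_2,\lambda_3$; from $K(e_2,e_3)$ it gets $\omega_{22}^1\omega_{33}^1=-\lambda_2\lambda_3-c$ and hence the closed formula \eqref{Tn310} for $\big(e_1(\lambda)\big)^2$; eliminating $e_1e_1(\lambda)$ and substituting $\lambda_3=-3\lambda-\lambda_2$ then yields the polynomial identity \eqref{Tn311} between $\lambda$ and $\lambda_2$, whose resultant with the quadratic \eqref{Tn31} is the non-trivial constant-coefficient polynomial \eqref{Tn312} in $\lambda$ --- and only at that point does the contradiction appear. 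Note also that the identity $K(e_2,e_3)=\omega_{22}^1\omega_{33}^1$ itself requires $\omega_{22}^3=\omega_{33}^2=\omega_{23}^1=\omega_{32}^1=0$, which rests on $e_i(\lambda_j)=0$ for $i,j\in\{2,3\}$ (Lemma \ref{lambda-23}); your one-line ``hence $e_2(\lambda_i)=e_3(\lambda_i)=0$'' can be salvaged from the quadratic you write down, but only after the distinctness of $\lambda_2,\lambda_3$ is in hand so that the roots depend smoothly on $H$.
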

As applications of Tehorem \ref{Theorem-1}, we use Chang's result
\cite{Chang1993-CAG} to obtain a global result for biconservative
hypersurfaces:
\begin{corollary}
Any closed biconservative hypersurface with constant scalar
curvature in sphere $\mathbb S^4$ is isoparametric.
\end{corollary}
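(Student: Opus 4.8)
The plan is to derive the corollary directly from Theorem \ref{Theorem-1} together with Chang's classification \cite{Chang1993-CAG}. Let $M^3$ be a closed biconservative hypersurface with constant scalar curvature in the unit sphere $\mathbb S^4$. Since $\mathbb S^4 = N^4(1)$ is a $4$-dimensional space form with $c = 1$, the hypersurface $M^3$ satisfies exactly the hypotheses of Theorem \ref{Theorem-1} in the case $n = 3$. Applying that theorem, I would first conclude that $M^3$ has constant mean curvature.

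At this stage $M^3$ is a closed hypersurface of $\mathbb S^4$ carrying \emph{both} constant mean curvature and constant scalar curvature, which is precisely the setting treated by Chang in \cite{Chang1993-CAG}: every closed hypersurface in $\mathbb S^4$ with constant mean curvature and constant scalar curvature is isoparametric. Invoking this result yields that $M^3$ is isoparametric, which completes the argument.

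The entire difficulty of the corollary is absorbed into Theorem \ref{Theorem-1}; once constancy of the mean curvature is secured, no further geometric analysis is needed. The only matters to check are bookkeeping ones: that $\mathbb S^4$ is the space form $N^4(1)$, so that Theorem \ref{Theorem-1} indeed applies, and that the meaning of ``closed'' used here (compact without boundary) matches the hypothesis in \cite{Chang1993-CAG}. The minimal case $H \equiv 0$ is not exceptional, since it is covered both by Theorem \ref{Theorem-1} and by the constant-mean-curvature hypothesis of Chang's theorem. I therefore expect the main obstacle to lie entirely in the proof of Theorem \ref{Theorem-1}, not in deducing this corollary from it.
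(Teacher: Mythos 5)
Your proposal is correct and is exactly the paper's argument: apply Theorem \ref{Theorem-1} to conclude that the hypersurface has constant mean curvature, then invoke Chang's theorem \cite{Chang1993-CAG} that a closed hypersurface of $\mathbb S^4$ with constant mean curvature and constant scalar curvature is isoparametric. No further comment is needed.
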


Even more, using Cheng and Wan's result \cite{Cheng-Wan1993}, we
have
\begin{corollary}
Any complete biconservative hypersurface with constant scalar
curvature in a space form $N^4(c)$ is isoparametric.
\end{corollary}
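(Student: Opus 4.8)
The plan is to obtain this as a direct consequence of Theorem \ref{Theorem-1} together with the classification of Cheng and Wan \cite{Cheng-Wan1993}, the point being that Theorem \ref{Theorem-1} supplies for free the constant-mean-curvature hypothesis that Cheng and Wan's theorem requires, while the completeness assumption is precisely what lets us enter the scope of their result.

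First I would apply Theorem \ref{Theorem-1} to the hypersurface $M^3 \subset N^4(c)$. Since $M^3$ is biconservative with constant scalar curvature, Theorem \ref{Theorem-1} guarantees that its mean curvature $H$ is constant; note that this step is purely local and does not invoke completeness. Consequently $M^3$ is now a hypersurface of $N^4(c)$ possessing \emph{both} constant mean curvature and constant scalar curvature. Next I would bring in the completeness of $M^3$: with constant mean curvature, constant scalar curvature, and completeness all in force, $M^3$ satisfies exactly the hypotheses of Cheng and Wan's theorem \cite{Cheng-Wan1993}, which asserts that any complete hypersurface in $N^4(c)$ with constant mean curvature and constant scalar curvature is isoparametric, i.e. has constant principal curvatures. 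Applying their result to $M^3$ immediately yields the desired conclusion.

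The deduction itself presents no genuine obstacle: it is a clean two-step combination in which Theorem \ref{Theorem-1} removes the \emph{a priori} non-CMC behaviour and completeness is used only to match the hypotheses of \cite{Cheng-Wan1993}. The substantive difficulty lies entirely within Theorem \ref{Theorem-1} — namely, showing that constant scalar curvature forces constant mean curvature for a biconservative hypersurface that is not assumed to be CMC — after which Cheng and Wan's classification closes the argument at once.
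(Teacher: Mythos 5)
Your argument is correct and is exactly the paper's intended proof: Theorem \ref{Theorem-1} (a local result) upgrades the biconservative hypersurface to one with constant mean curvature, and then completeness together with constant mean and scalar curvature places $M^3$ within the scope of Cheng and Wan's theorem \cite{Cheng-Wan1993}, which yields isoparametricity. The paper presents this corollary as an immediate consequence in precisely this two-step fashion, so there is nothing to add.
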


Secondly,  we investigate  Fetcu-Oniciuc's problem for $n=4$.
Interestingly, we found a different class of non-CMC biconservative
hypersurfaces with constant scalar curvature in $N^5(c)$. Precisely,
we prove
\begin{theorem} \label{Theorem-2}
Any biconservative hypersurface with constant scalar curvature in
$N^5(c)$ is either CMC or contained in a certain non-CMC rotational
hypersurface, where the rotational hypersurface has two distinct
principal curvatures with $-\lambda_1=\lambda_2=\lambda_3=\lambda_4$
and  the scalar curvature $R$ of this rotational hypersurface is
$12c$.
\end{theorem}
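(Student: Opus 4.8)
The plan is to reduce to the non-CMC case and then exploit the rigidity forced by the biconservative equation together with the constant scalar curvature hypothesis. Recall that a hypersurface $M^4\subset N^5(c)$ is biconservative precisely when its shape operator $A$ and mean curvature $H$ satisfy $A(\nabla H)=-2H\,\nabla H$ (the $n=4$ case of $A(\nabla H)=-\tfrac n2 H\,\nabla H$). Suppose $H$ is not constant; then on the open set $U=\{\nabla H\neq0\}$ one may set $e_1=\nabla H/|\nabla H|$, and the biconservative equation says that $e_1$ is a principal direction with principal curvature $\lambda_1=-2H$. Completing $e_1$ to a local orthonormal principal frame $\{e_1,e_2,e_3,e_4\}$, $Ae_i=\lambda_i e_i$, we have $e_1(H)=|\nabla H|\neq0$ while $e_i(H)=0$ for $i\ge2$, and the trace identity $\sum_i\lambda_i=4H$ gives $\lambda_2+\lambda_3+\lambda_4=6H$.

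First I would extract the first-order geometric data from the Codazzi equations $(\nabla_XA)Y=(\nabla_YA)X$ of the space form. Written in the principal frame they yield, for $j\ge2$, the evolution relation $e_1(\lambda_j)=(\lambda_j-\lambda_1)\langle\nabla_{e_j}e_j,e_1\rangle$, the vanishing $\langle\nabla_{e_1}e_1,e_i\rangle=0$ (so the $e_1$-curves are geodesics of $M$), and, for distinct $i,j\ge2$ with $\lambda_i\neq\lambda_j$, relations forcing certain connection coefficients to vanish. Next I would bring in the constant scalar curvature assumption through the Gauss equation $R=12c+(4H)^2-|A|^2=12c+2\sum_{i<j}\lambda_i\lambda_j$, so that $\sum_{i<j}\lambda_i\lambda_j$ is constant; combined with $\lambda_1=-2H$ this is equivalent to $\lambda_2\lambda_3+\lambda_2\lambda_4+\lambda_3\lambda_4=12H^2+b$ for a constant $b$. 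Differentiating the two symmetric-function identities $\lambda_2+\lambda_3+\lambda_4=6H$ and $\lambda_2\lambda_3+\lambda_2\lambda_4+\lambda_3\lambda_4=12H^2+b$ along $e_1$ and substituting the Codazzi evolution relations converts the problem into an overdetermined system for $\lambda_2,\lambda_3,\lambda_4$ and the coefficients $\langle\nabla_{e_j}e_j,e_1\rangle$.

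The heart of the argument, and the step I expect to be the main obstacle, is the case analysis according to the number of distinct values among $\lambda_2,\lambda_3,\lambda_4$. When these three eigenvalues are not all equal, the Codazzi relations for the directions carrying distinct principal curvatures, fed back into the differentiated scalar-curvature identities, should be shown to force $e_1(H)=0$ on $U$, contradicting the definition of $U$. Carrying this out cleanly—tracking all connection coefficients, using the integrability and Gauss relations to eliminate unknowns, and treating the subcases $\lambda_2=\lambda_3\neq\lambda_4$ and $\lambda_2,\lambda_3,\lambda_4$ mutually distinct separately—is the technical core. The outcome is that on $U$ one must have $\lambda_2=\lambda_3=\lambda_4$, and the trace identity then pins these down as $\lambda_2=\lambda_3=\lambda_4=2H$, so that $-\lambda_1=\lambda_2=\lambda_3=\lambda_4=2H$.

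Finally I would identify the resulting hypersurface. With $\lambda_2=\lambda_3=\lambda_4=2H$ a direct computation gives $\sum_{i<j}\lambda_i\lambda_j=(-2H)(6H)+3(2H)^2=0$, whence $R=12c$; in particular the constant scalar curvature condition holds automatically in this configuration, which is exactly why a non-CMC branch can survive. It remains to prove that $M$ is rotational: the principal curvature $2H$ has constant multiplicity $3$, and the Codazzi equations show that its eigendistribution $\mathrm{span}\{e_2,e_3,e_4\}$ is integrable with totally umbilical leaves, while $e_1$ is a unit geodesic field orthogonal to these leaves. Since the leaves are totally umbilical $3$-dimensional submanifolds of $N^5(c)$ they lie on totally umbilical round hyperspheres, and the standard structure theorem for hypersurfaces with one simple principal curvature and a complementary spherical distribution identifies $M$ as an open part of a rotational hypersurface of $N^5(c)$, whose profile curve is recovered by integrating the ODE for $H$ along $e_1$ coming from $e_1(\lambda_2)=(\lambda_2-\lambda_1)\langle\nabla_{e_2}e_2,e_1\rangle$. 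This gives the two-distinct-principal-curvature rotational hypersurface with $-\lambda_1=\lambda_2=\lambda_3=\lambda_4$ and scalar curvature $12c$ asserted in the theorem.
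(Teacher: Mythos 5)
Your setup, final identification of the exceptional hypersurface, and the computation that $\lambda_2=\lambda_3=\lambda_4=2H$ forces $R=12c$ all match the paper (which closes Case 1 by citing do Carmo--Dajczer's Theorem 4.2 rather than re-deriving the spherical-distribution structure theorem, but that is the same fact). However, there is a genuine gap: the entire content of the theorem lies in ruling out the configurations where $\lambda_2,\lambda_3,\lambda_4$ are not all equal, and your proposal only asserts that the Codazzi relations ``should be shown to force $e_1(H)=0$'' there, without an argument. The route you sketch --- differentiating the two symmetric-function identities $\sum_{i\ge2}\lambda_i=6H$ and $\sum_{i<j,\,i,j\ge2}\lambda_i\lambda_j=\mathrm{const}+12H^2$ along $e_1$ and substituting the Codazzi evolution equations --- is not enough by itself: in the four-distinct-eigenvalue case two constant symmetric functions leave a one-parameter family of admissible $(\lambda_2,\lambda_3,\lambda_4)$ at each point, and no contradiction follows at first order.

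The paper's actual mechanism is substantially heavier. It introduces $f_k=\sum_{i\ge2}(\omega_{ii}^1)^k$ and $T=f_1$, expresses $f_1,\dots,f_5$ in terms of $T$, $\lambda=\lambda_1$ and their $e_1$-derivatives (Lemma \ref{f1-5}), imposes the purely algebraic Newton-type identities of Lemma \ref{FHZ-n4}, and then performs a long elimination to prove $e_i(T)=0$ (Lemma \ref{lemma-e234T}) and, in each case, a nontrivial polynomial identity in $\lambda$ with constant coefficients, forcing $\lambda$ constant. Crucially, the case analysis also requires second-order information you have not invoked: the Gauss equations for the mixed components $\langle R(e_i,e_j)e_i,e_j\rangle$ with $i,j\ge2$, which produce the relations $\omega_{ii}^1\omega_{jj}^1-2\omega_{ij}^k\omega_{ji}^k=-\lambda_i\lambda_j-c$ (equations \eqref{F4}--\eqref{F6}) and the auxiliary identities \eqref{F1}--\eqref{F3}; the four-distinct case further splits according to whether $\omega_{23}^4$, $\omega_{32}^4$, $\omega_{43}^2$ vanish, and in the generic subcase one must track $K=\lambda_2\lambda_3\lambda_4$ and eliminate derivatives of $\lambda$ up to fifth order. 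Until you supply these eliminations (or an alternative argument of comparable force), the claim that non-equal $\lambda_2,\lambda_3,\lambda_4$ leads to a contradiction is unsupported, and the proof is incomplete at its technical core.
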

The main idea to prove Theorem \ref{Theorem-2} (also Theorems
\ref{Theorem-1}) is based on the approach developed in
\cite{fu-hong-zhan2020} for settling Chen's biharmonic conjecture.
By converting the differential equations related to principal
curvature functions into a system of algebraic differential
equations, the solution can be determined completely. Furthermore,
making use of the elimination method  we get a polynomial function
equation and derive a contradiction. Combining these with using a
result of do Carmo and Dajczer \cite{doCarmo-Dajczer1983}, we prove
Theorem \ref{Theorem-2}. The current approach seems to be quite
useful, see also Guan-Li-Vrancken's work \cite{guanli2021}.
\begin{remark}\label{remark1}
For a non-CMC rotational hypersurface in $N^5(c)$, if its principal
curvatures satisfying a linear relation
$$-\lambda_1=\lambda_2=\lambda_3=\lambda_4,$$
then the rotational
hypersurface must be biconservative (see details in Section 5).
\end{remark}
\begin{remark}
Li and Wei showed in \cite{Liwei2007} that there are no compact
embedded rotational hypersurfaces with constant scalar curvature
$n(n-1)$ of $\mathbb S^{n+1}$ other than the Riemannian product
$S^{n-1}\Big(\sqrt{\frac{n-2}{n}}\Big)\times
S^1\Big(\sqrt{\frac{2}{n}}\Big)$ and round geodesic spheres. Then we
remark that for $c=1$, the rotational hypersurface in Theorem
\ref{Theorem-2} is not compact embedded.
\end{remark}

The paper is organized as follows. In Section 2, we recall some
background on the theory of hypersurfaces and derive a useful lemma
(Lemma 2.2) for biconservative hypersurfaces with constant scalar
curvature. In Section 3, we study the case of biconservative
hypersurfaces in 4-dimensional space forms. We establish three key
lemmas (Lemmas 3.1-3.3)  to prove
 Theorem \ref{Theorem-1}. In Section 4, we consider the case
of biconservative hypersurfaces in a 5-dimensional space form and
give a proof of  Theorem \ref{Theorem-2}. In Section 5, We give
details to verify Remark \ref{remark1}.

\section{Preliminaries}

Let $N^{n+1}(c)$ be an $(n+1)$-dimensional Riemannian space form
with constant sectional curvature $c$. For an isometric immersion
$\phi: M^n\rightarrow N^{n+1}(c)$, we denote by $\nabla$ the
Levi-Civita connection of $M^n$ and $\tilde\nabla$  the Levi-Civita
connection of $N^{n+1}(c)$. For any $X,Y,Z\in C(TM)$, the Gauss and
Codazzi equations are given by
\begin{eqnarray}
R(X,Y)Z=c\big(\langle Y,Z\rangle X-\langle X,Z\rangle Y\big) +
\langle AY,Z\rangle AX-\langle AX,Z\rangle AY,\label{Gauss-Equation}
\end{eqnarray}
\begin{eqnarray}
(\nabla_{X} A)Y=(\nabla_{Y} A)X.\label{Codazzi-Equation}
\end{eqnarray}
Note that here $A$ is the shape operator satisfying
$$(\nabla_{X} A)Y=\nabla_X(AY)-A(\nabla_X Y),$$
and the curvature tensor of $M^n$ is defined to be
\begin{equation}\label{curvature-tensor}
  R(X,Y)=\nabla_X \nabla_Y-\nabla_Y \nabla_X-\nabla_{[X,Y]}.
\end{equation}

Let us recall the definition of biconservative immersions
(c.f.\cite{CMOP2014, loubeau2008}). The stress-energy tensor $S_2$
of the bienergy is defined to be
\begin{align}\label{stress-energy}
  S_2(X,Y)=&\frac 12 \vert \tau(\phi)\vert^2\langle X,Y\rangle+\langle d\phi,\nabla\tau(\phi)\rangle \langle X,Y\rangle\\
  &-\langle d\phi(X),\nabla_Y\tau(\phi)\rangle-\langle d\phi(Y),\nabla_X\tau(\phi)\rangle,\nonumber
\end{align}
and it satisfies
\begin{equation}\label{divergence}
  div\,S_2=-\langle \tau_2(\phi),d\phi\rangle=-\tau_2(\phi)^\top.
\end{equation}
The isometric immersion
$\phi:(M^n,g)\longrightarrow(N^m,\widetilde{g})$ is said to be
\textit{biconservative} if $div\,S_2=0$.

Taking into account \eqref{Euler-Lagrange} and \eqref{divergence},
we have
\begin{proposition} \rm{(c.f.\cite{fetcu1})}
A hypersurface $\phi: M^n\rightarrow  N^{n+1}(c)$ is biconservative
if the mean curvature $H$ and the shape operator $A$ on $M^n$
satisfy
\begin{equation} \label{biharmonic condition}
A\,{\rm grad}H=-\frac{n}{2}H{\rm grad}H.
\end{equation}
\end{proposition}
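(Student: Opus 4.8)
The plan is to use the identity \eqref{divergence}, which reduces the biconservative condition $\operatorname{div} S_2=0$ to the vanishing of the tangential part $\tau_2(\phi)^\top$ of the bitension field. Thus it suffices to compute the tangential component of $\tau_2(\phi)$ for a hypersurface and to show that it is a nonzero multiple of $A\,\mathrm{grad}\,H+\tfrac{n}{2}H\,\mathrm{grad}\,H$. Throughout I would fix a unit normal field $\xi$ along $\phi$, so that the tension field is $\tau(\phi)=nH\xi$ with $nH=\operatorname{trace}A$, and I would work in a local orthonormal frame $\{e_1,\dots,e_n\}$ of $M^n$ that is geodesic at the point under consideration (so $\nabla_{e_i}e_j=0$ there), which kills the spurious first-order terms and isolates the genuinely tensorial contributions.

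First I would expand $\Delta\tau(\phi)=-\sum_i\widetilde\nabla_{e_i}\widetilde\nabla_{e_i}(nH\xi)$ at the chosen point. Using the Weingarten formula $\widetilde\nabla_{e_i}\xi=-Ae_i$ and the Gauss formula $\widetilde\nabla_{e_i}(Ae_i)=(\nabla_{e_i}A)e_i+\langle Ae_i,Ae_i\rangle\xi$, a direct computation produces a normal contribution involving $\Delta H$ and $H|A|^2$, together with a tangential contribution coming from the two terms $-2n\sum_i(e_iH)\,Ae_i$ and $-nH\sum_i(\nabla_{e_i}A)e_i$. The first equals exactly $-2nA(\mathrm{grad}\,H)$. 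For the second, the Codazzi equation \eqref{Codazzi-Equation} is the key input: since each $(\nabla_X A)$ is self-adjoint and $(\nabla_{e_i}A)e_j=(\nabla_{e_j}A)e_i$, one gets $\langle\sum_i(\nabla_{e_i}A)e_i,e_j\rangle=e_j(\operatorname{trace}A)=n\,e_j(H)$, so that $\sum_i(\nabla_{e_i}A)e_i=n\,\mathrm{grad}\,H$. Hence the tangential part of $-\Delta\tau(\phi)$ equals $-2nA(\mathrm{grad}\,H)-n^2H\,\mathrm{grad}\,H$.

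Next I would treat the curvature term. Because $N^{n+1}(c)$ has constant sectional curvature, the ambient curvature satisfies $R^N(X,Y)Z=c(\langle Y,Z\rangle X-\langle X,Z\rangle Y)$, consistently with \eqref{Gauss-Equation}, so $R^N(e_i,nH\xi)e_i=-cnH\xi$ for each $i$ since $\langle\xi,e_i\rangle=0$. Summing over $i$ shows $\operatorname{trace}R^N(d\phi,\tau(\phi))d\phi=-cn^2H\xi$ is purely normal and contributes nothing to $\tau_2(\phi)^\top$. Collecting the tangential pieces then yields $\tau_2(\phi)^\top=-2n\bigl(A\,\mathrm{grad}\,H+\tfrac{n}{2}H\,\mathrm{grad}\,H\bigr)$, and combining with \eqref{divergence} gives $\operatorname{div}S_2=2n\bigl(A\,\mathrm{grad}\,H+\tfrac{n}{2}H\,\mathrm{grad}\,H\bigr)$. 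As $2n\neq0$, the condition $\operatorname{div}S_2=0$ is equivalent to \eqref{biharmonic condition}, which proves the proposition.

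I expect the main obstacle to be the careful bookkeeping in the expansion of $\Delta(nH\xi)$, in particular keeping the tangential and normal parts cleanly separated through repeated use of the Gauss and Weingarten formulas, and correctly invoking the Codazzi identity to recognize $\sum_i(\nabla_{e_i}A)e_i$ as $n\,\mathrm{grad}\,H$, since this is precisely where the tangential term $A(\mathrm{grad}\,H)$ arises; the curvature computation and the final algebra are routine by comparison.
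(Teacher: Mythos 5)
Your proposal is correct and follows exactly the route the paper itself indicates: reduce $\operatorname{div}S_2=0$ via \eqref{divergence} to the vanishing of $\tau_2(\phi)^\top$, then compute the tangential part of the bitension field using the Gauss--Weingarten formulas and the Codazzi identity $\sum_i(\nabla_{e_i}A)e_i=n\,\mathrm{grad}\,H$. The paper omits this computation entirely (it simply cites \cite{fetcu1}), and your expansion, including the identification of the tangential part as $-2n\bigl(A\,\mathrm{grad}\,H+\tfrac{n}{2}H\,\mathrm{grad}\,H\bigr)$ and the observation that the curvature term is purely normal in a space form, is the standard argument and is carried out correctly.
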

According to \eqref{biharmonic condition}, it is straightforward to
see that CMC hypersurfaces are automatically biconservative in
$N^{n+1}(c)$. We call non-CMC biconservative hypesurfaces as proper
biconservative hypesurfaces in $N^{n+1}(c)$.

 From now on, we consider a proper biconservative hypersurface $M^n$ in a space form  $N^{n+1}(c)$, that is, $H$ is
not identically constant on a connected component of $M^n$, where
the number of distinct principal curvatures is constant, and all of
the principal curvature functions of the shape operator $A$ are
always smooth. If we fix $p\in M^n$, then there exists a
neighborhood $U_p$ of $p$ such that ${\rm grad}\,H  \neq 0$ on
$U_p$.

From \eqref{biharmonic condition}, we see that ${\rm grad}\,H$ is an
eigenvector of $A$ with the corresponding principal curvature
$-nH/2$. Then we can choose a suitable orthonormal frame
$\{e_1,\cdots,e_n\}$ such that $e_1={\rm grad}H/|{\rm grad}H|$ and
the shape operator $A$ has the form
\begin{eqnarray}\label{A}
A=\mathrm{diag}(\lambda_1,\cdots, \lambda_n),
\end{eqnarray}
where $\lambda_1=-nH/2, \lambda_2, \cdots, \lambda_n$ are the
principal curvatures of $M^n$. The definition of the mean curvature
means $\sum_{i=1}^n\lambda_i=nH$, and hence
\begin{eqnarray}\label{sum1}
\lambda_2+\cdots+\lambda_n=-3\lambda_1.
\end{eqnarray}
The squared length of the second fundamental form, denoted by $S$,
is defined as
\begin{eqnarray}\label{sum2}
S={\rm trace}\, A^2=\sum_{i=2}^n\lambda^2_i+\lambda^2_1.
\end{eqnarray}
By Gauss equation \eqref{Gauss-Equation}, it is easy to check that
the scalar curvature $R$ of $M^n$ is given by
\begin{equation}\label{sum3}
  R=n(n-1)c+4\lambda_1^2-S.
\end{equation}
Combining \eqref{sum2} with \eqref{sum3} gives
\begin{equation}\label{sum4}
  \sum_{i=2}^n\lambda^2_i=n(n-1)c+3\lambda_1^2-R.
\end{equation}
Since $e_1 $ is parallel to ${\rm grad}H$, it follows that
$e_1(H)\neq 0$ and $e_i(H)=0$ for $2\leq i\leq n$, and hence
\begin{eqnarray} \label{H1} e_1(\lambda_1)\neq0,\quad
e_i(\lambda_1)=0, \quad 2\leq i\leq n.\label{A2}
\end{eqnarray}
Set $ \nabla_{e_i}e_j=\sum_{k=1}^n\omega_{ij}^k e_k$ $(1\leq i,j\leq
n)$, where $\omega_{ij}^k$ is the connection coefficients of $M^n$.
By the compatibility conditions $\nabla_{e_k}\langle
e_i,e_i\rangle=0$ and $\nabla_{e_k}\langle e_i,e_j\rangle=0$ $(i\neq
j)$, we have
\begin{eqnarray}\label{compatibility}
\omega_{ki}^i=0,\quad \omega_{ki}^j+\omega_{kj}^i=0,\quad i\neq j.
\end{eqnarray}
Using Codazzi equation \eqref{Codazzi-Equation}, it yields
\begin{eqnarray}
e_i(\lambda_j)&=&(\lambda_i-\lambda_j)\omega_{ji}^j,\label{Codazzi-1}\\
(\lambda_i-\lambda_j)\omega_{ki}^j&=&(\lambda_k-\lambda_j)\omega_{ik}^j\label{Codazzi-2}
\end{eqnarray}
for distinct $i, j, k$. Using \eqref{compatibility} with
\eqref{Codazzi-1}, we then actually have
\begin{equation}\label{BICONSERVATIVE-1}
  e_1(\lambda_i)=\lambda_i\omega_{ii}^1-\lambda_1\omega_{ii}^1.
\end{equation}
Similarly to the Lemma 3.2 in \cite{fu-hong2018}, it follows from
Gauss euqation that
\begin{equation}\label{BICONSERVATIVE-2}
  e_1(\omega_{ii}^1)=(\omega_{ii}^1)^2+\lambda_1\lambda_i+c.
\end{equation}

Denote $f_k=\sum_{i=2}^n (\omega_{ii}^1)^k,~ k \in \mathbb{N}^*$.
For the sake of simplicity, we write $\lambda=\lambda_1$, $T=f_1$,
$T'=e_1(T)$, $T''=e_1e_1(T)$, $T'''=e_1e_1e_1(T)$ and
$T''''=e_1e_1e_1e_1(T)$.

By modifying an argument  in \cite{fu-hong-zhan2020}, we have
\begin{lemma}\label{f1-5}
Let $M^n$ be a proper biconservative hypersurface with constant
scalar curvature $R$ in  $N^{n+1}(c)$. Then the functions $f_1,
\cdots, f_5$ can be written as
\begin{align}\label{L6}
\begin{cases}
f_1=T,\\
f_2=T'+3\lambda^2-(n-1)c,\\
f_3=\frac{1}{2}T''-(\lambda^2+c)T+6\lambda\lambda',\\
f_4=\frac{1}{6}T'''-\frac{4}{3}(\lambda^2+c)T'-\frac{7}{6}\lambda\lambda'T+2\lambda'^2\\
  \quad\quad +\frac 72 \lambda\lambda''+\frac{n^2-10}{2}c\lambda^2-\frac 12 R\lambda^2+(n-1)c^2,\\
f_5=\frac{1}{24}T''''-\frac{5}{6}(\lambda^2+c)T''-\frac{35}{24}\lambda\lambda'T'\\
  \quad\quad-\frac{1}{24}(11\lambda\lambda''+7\lambda'^2-24\lambda^4-48c\lambda^2-24c^2)T\\
    \quad\quad+\frac{11}{8}\lambda\lambda'''+\frac{15}{8}\lambda'\lambda''
    -2\lambda^3\lambda'-\frac{134-5n^2}{12}c\lambda\lambda'-\frac{5}{12}\lambda\lambda'R.
\end{cases}
\end{align}
\end{lemma}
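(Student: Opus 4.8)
The plan is to realize each $f_k$ as one member of a small family of weighted power sums in the connection coefficients and then to close a differential recursion among them. Writing $b_i:=\omega_{ii}^1$ for $2\le i\le n$, so that $f_k=\sum_{i=2}^n b_i^{\,k}$, I would introduce the two auxiliary families
\begin{equation}
g_m:=\sum_{i=2}^n \lambda_i b_i^{\,m},\qquad h_p:=\sum_{i=2}^n \lambda_i^2 b_i^{\,p}.\nonumber
\end{equation}
Differentiating along $e_1$ and substituting the two structural identities \eqref{BICONSERVATIVE-1}, which reads $e_1(\lambda_i)=(\lambda_i-\lambda)b_i$, and \eqref{BICONSERVATIVE-2}, which reads $e_1(b_i)=b_i^2+\lambda\lambda_i+c$, I would obtain the recursions
\begin{align}
f_k'&=k\big(f_{k+1}+\lambda g_{k-1}+c\,f_{k-1}\big),\nonumber\\
g_m'&=(m+1)g_{m+1}-\lambda f_{m+1}+m\lambda h_{m-1}+mc\,g_{m-1}.\nonumber
\end{align}
Solving the first relation for $f_{k+1}$ furnishes the key elimination step
\begin{equation}
f_{k+1}=\tfrac1k f_k'-\lambda g_{k-1}-c\,f_{k-1},\nonumber
\end{equation}
so each $f_{k+1}$ is pinned down once $f_k$, $f_{k-1}$ and the single weighted sum $g_{k-1}$ are known.

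The only external input is the initial data, and this is exactly where constancy of $R$ enters. From \eqref{sum1} and \eqref{sum4} one reads off $f_0=n-1$, $g_0=-3\lambda$ and $h_0=n(n-1)c+3\lambda^2-R$. Differentiating \eqref{sum1} along $e_1$ and using $e_1(\lambda_i)=(\lambda_i-\lambda)b_i$ yields $g_1-\lambda T=-3\lambda'$, that is
\begin{equation}
g_1=\lambda T-3\lambda';\nonumber
\end{equation}
differentiating \eqref{sum4}, where $R'=0$ is used, gives $2(h_1-\lambda g_1)=6\lambda\lambda'$, whence $h_1=\lambda^2 T$. Thus $f_0,g_0,h_0,g_1,h_1$ are all expressed through $\lambda,\lambda',T$ and the constants $n,c,R$, and this is the precise point at which the constant scalar curvature hypothesis is consumed.

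With this data the computation is a finite bootstrap. Taking $k=1$ in the elimination step reproduces $f_2=T'+3\lambda^2-(n-1)c$; taking $k=2$ and inserting $g_1$ gives $f_3$; for $k=3,4$ one first computes $g_2$ and $g_3$ from the $g$-recursion at orders $m=1,2$, which call only on $h_0,h_1$ together with the already-known $f$'s and $g$'s, and then substitutes into the elimination step. A structural point I would verify explicitly is that the system \emph{closes}: the $g$-recursion at order $m$ invokes $h_{m-1}$ only, so reaching $f_5$ never requires $h_p$ with $p\ge 2$, and no new families appear. The main obstacle is therefore not conceptual but the bookkeeping of the higher derivatives — repeatedly differentiating $f_3,f_4$ and $g_2,g_3$ and correctly collecting the numerous $c\lambda^2$, $R\lambda^2$, $\lambda'^2$ and $\lambda\lambda''$ terms — with the $f_5$ identity by far the most laborious; I would control the cancellations by computing $g_2,g_3$ in closed form first and keeping the $T$-derivative contributions and the pure $\lambda$-derivative contributions in separate groupings.
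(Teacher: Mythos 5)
Your proposal is correct and follows essentially the same route as the paper: the paper likewise introduces the weighted sums $g_m=\sum_{i=2}^n\lambda_i(\omega_{ii}^1)^m$ and $\sum_{i=2}^n\lambda_i^2\omega_{ii}^1$ (your $h_1$), derives the same recursions from \eqref{BICONSERVATIVE-1}--\eqref{BICONSERVATIVE-2}, uses the same initial data $g_1=\lambda T-3\lambda'$ and $h_1=\lambda^2 T$ obtained by differentiating \eqref{sum1} and \eqref{sum4} with $R$ constant, and bootstraps up to $f_5$ via $f_{k+1}=\tfrac1k e_1(f_k)-\lambda g_{k-1}-cf_{k-1}$. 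Your systematic statement of the general recursions and the closure observation (no $h_p$ with $p\ge 2$ is ever needed) is a clean packaging of exactly the computation the paper carries out term by term.
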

\begin{proof}
Put $g_1=\sum_{i=2}^n \lambda_i\omega_{ii}^1$. Taking the sum of $i$
from 2 to $n$ in \eqref{BICONSERVATIVE-2} and
\eqref{BICONSERVATIVE-1} respectively, using \eqref{sum1} we have
\begin{align}
f_2=&3\lambda^2+e_1(f_1)-(n-1)c=T'+3\lambda^2-(n-1)c,\label{Q2}\\
g_1=&\lambda T-3e_1(\lambda)=\lambda T-3\lambda'.\label{Q3}
\end{align}
By multiplying $\omega_{ii}^1$ on both sides of equation
\eqref{BICONSERVATIVE-2}, taking the sum of $i$ and using
\eqref{Q2}--\eqref{Q3} gives
\begin{align}
f_3=&\frac 12 e_1(f_2)-\lambda  g_1-cf_1\label{Q4}\\
=&\frac{1}{2}T''-(\lambda^2+c)T+6\lambda\lambda'.\nonumber
\end{align}
Put $g_2=\sum_{i=2}^n\lambda_i\big(\omega_{ii}^1\big)^2$.
Differentiating $g_1=\sum_{i=2}^n \lambda_i\omega_{ii}^1$ along
$e_1$, using \eqref{BICONSERVATIVE-1}-\eqref{BICONSERVATIVE-2}, it
follows that
\begin{align} \label{Q5}
g_2=\frac{1}{2}\big\{e_1(g_1)-\lambda \sum_{i=2}^n \lambda_i^2
+\lambda f_2+3c\lambda\big\}.
\end{align}
In view of \eqref{Q2}--\eqref{Q3} and \eqref{sum4}, the above
equation becomes
\begin{align}\label{Q6}
g_2=\lambda T'+\frac 12 \lambda'T-\frac 32
\lambda''-\big(\frac{n^2}{2}-2\big)c\lambda+\frac 12\lambda R.
\end{align}
By multiplying $(\omega_{ii}^1)^2$ on both sides of equation
\eqref{BICONSERVATIVE-2}, taking the sum of $i$ from 2 to $n$ gives
\begin{align*}
f_4=\frac{1}{3}e_1(f_3)-\lambda  g_2-cf_2.
\end{align*}
Using \eqref{Q2}, \eqref{Q4} and \eqref{Q6}, $f_4$ can be written as
\begin{align}\label{Q7}
f_4=&\frac{1}{6}T'''-\frac{4}{3}(\lambda^2+c)T'-\frac{7}{6}\lambda\lambda'T+2\lambda'^2\\
&+\frac{7}{2}\lambda\lambda''+ \frac{n^2-10}{2}c\lambda^2-\frac 12
R\lambda^2 +(n-1)c^2.\nonumber
\end{align}
Put $g_3=\sum_{i=2}^n\lambda_i^2\omega_{ii}^1$. Multiplying equation
\eqref{BICONSERVATIVE-1} by $\lambda_i$, using \eqref{sum4} and
\eqref{Q3} yields
\begin{align}\label{Q8}
    g_3=\frac 12 e_1(\sum_{i=2}^n\lambda^2_i)+\lambda g_1=\lambda^2 T.
\end{align}
Put $g_4=\sum_{i=2}^n\lambda_i\big(\omega_{ii}^1\big)^3$.
Differentiating $g_2=\sum_{i=2}^n\lambda_i\big(\omega_{ii}^1\big)^2$
with respect to $e_1$ and using
\eqref{BICONSERVATIVE-1}--\eqref{BICONSERVATIVE-2}, we have
\begin{align}\label{Q9}
g_4=\frac{1}{3}\big(e_1(g_2)+\lambda f_3-2\lambda g_3-2cg_1\big),
\end{align}
which together with \eqref{Q3}, \eqref{Q4}, \eqref{Q6} and
\eqref{Q8} yields
\begin{align}\label{Q10}
    g_4 =& \frac 1 2 \lambda T''+\frac 1 2 \lambda' T'+(\frac{1}{6}\lambda''-\lambda^3-c\lambda)T\\
        &-\frac 1 2 \lambda'''+(2\lambda^2+\frac{1}{6}R)\lambda'+\frac 16 (16-n^2)c\lambda'.\nonumber
\end{align}
By multiplying $(\omega_{ii}^1)^3$ on both sides of equation
\eqref{BICONSERVATIVE-2} and taking the sum of $i$, we have
\begin{align*}
f_5=\frac{1}{4}e_1(f_4)-\lambda  g_4-cf_3.
\end{align*}
It follows from \eqref{Q4}, \eqref{Q7} and \eqref{Q9} that
\begin{align}\label{Q11}
f_5=&\frac{1}{24}T''''-\frac{5}{6}(\lambda^2+c)T''-\frac{35}{24}\lambda\lambda'T'\\
  &-\frac{1}{24}(11\lambda\lambda''+7\lambda'^2-24\lambda^4-48c\lambda^2-24c^2)T\nonumber\\
   &+\frac{11}{8}\lambda\lambda'''+\frac{15}{8}\lambda'\lambda''
    -2\lambda^3\lambda'-\frac{134-5n^2}{12}c\lambda\lambda'-\frac{5}{12}\lambda\lambda'R. \nonumber
\end{align}
\end{proof}

\section{Biconservative hypersurfaces in $N^4(c)$}

In this section, we consider biconservative hypersurfaces with
constant scalar curvature in $N^4(c)$. In order to prove the Theorem
\ref{Theorem-1}, we need the following lemmas. Firstly, an easy
computation shows that
\begin{lemma}\label{FHZ-n3}
Let $f_k=(\omega_{22}^1)^k+ (\omega_{33}^1)^k$. Then the functions
$f_1,\cdots,f_4$ satisfy the following two algebraic equations:
\begin{align}
    &f_1^3-3f_1f_2+2f_3=0, \label{f123}\\
    &f_1^4-2f_1^2f_2-f_2^2+2f_4=0.\label{f124}
\end{align}
\end{lemma}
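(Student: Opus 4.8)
The plan is to observe that for $n=3$ the summation index $i$ in $f_k=\sum_{i=2}^n(\omega_{ii}^1)^k$ runs only over $\{2,3\}$, so setting $a=\omega_{22}^1$ and $b=\omega_{33}^1$ we simply have $f_k=a^k+b^k$, the $k$-th power sum of two quantities. From this viewpoint \eqref{f123} and \eqref{f124} are nothing but algebraic identities among the power sums of two variables, and the natural way to prove them is to reduce everything to the elementary symmetric functions $\sigma_1=a+b$ and $\sigma_2=ab$, which generate all symmetric expressions in $a,b$.

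First I would record the two basic relations $\sigma_1=f_1$ and, since $f_2=a^2+b^2=\sigma_1^2-2\sigma_2$, the inversion $\sigma_2=\tfrac12\bigl(f_1^2-f_2\bigr)$. Next, because $a$ and $b$ are the two roots of $x^2-\sigma_1 x+\sigma_2=0$, each satisfies $x^2=\sigma_1 x-\sigma_2$; multiplying these by $a^{k-2}$, respectively $b^{k-2}$, and adding gives the Newton recurrence
\[
f_k=\sigma_1 f_{k-1}-\sigma_2 f_{k-2},\qquad k\geq 2,\quad f_0=2.
\]
Applying this recurrence with $k=3$ and $k=4$ expresses $f_3$ and $f_4$ through $f_1$ and $f_2$ once $\sigma_1=f_1$ and $\sigma_2=\tfrac12(f_1^2-f_2)$ are substituted.

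The remaining work is a direct substitution and bookkeeping of coefficients. From $f_3=\sigma_1 f_2-\sigma_2 f_1$ one finds $2f_3=3f_1f_2-f_1^3$, which is exactly \eqref{f123}; feeding this back into $f_4=\sigma_1 f_3-\sigma_2 f_2$ and simplifying yields $2f_4=2f_1^2f_2-f_1^4+f_2^2$, which is precisely \eqref{f124}. There is no genuine analytic or geometric obstacle here — the identities do not even use the biconservativity hypotheses or the equations of Section 2, only the fact that $f_k$ is a sum of exactly two $k$-th powers. The one point worth flagging is conceptual rather than technical: these two relations are special to the two-summand case $n=3$ (for three or more summands the analogous low-degree power sums are algebraically independent), which is why they furnish genuinely new constraints precisely in the $N^4(c)$ setting.
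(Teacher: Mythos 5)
Your proposal is correct and is essentially the paper's own argument: both reduce $f_3$ and $f_4$ to $f_1,f_2$ via the elementary symmetric functions of the two quantities $\omega_{22}^1,\omega_{33}^1$, the paper using the explicit factorizations $a^3+b^3=(a+b)(a^2-ab+b^2)$ and $a^4+b^4=(a^2+b^2)^2-2a^2b^2$ while you package the same computation as the Newton recurrence $f_k=\sigma_1 f_{k-1}-\sigma_2 f_{k-2}$. Your resulting identities $2f_3=3f_1f_2-f_1^3$ and $2f_4=2f_1^2f_2-f_1^4+f_2^2$ match the paper's \eqref{ff3} and \eqref{ff4} exactly.
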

\begin{proof}
From the definition of $f_k$, one has
\begin{align}
  f_3= & (\omega_{22}^1+\omega_{33}^1)\big\{ (\omega_{22}^1)^2-\omega_{22}^1\omega_{33}^1+(\omega_{33}^1)^2\big \} \nonumber\\
  = & f_1\big\{f_2-\frac 12(f_1^2-f_2)\big\} \nonumber\\
  = & \frac 12 f_1(3f_2-f_1^2)  \label{ff3}
\end{align}
and
\begin{align}
  f_4= & (\omega_{22}^1)^4+(\omega_{33}^1)^4 +2 (\omega_{22}^1)^2 (\omega_{33}^1)^2-2 (\omega_{22}^1)^2 (\omega_{33}^1)^2 \nonumber\\
  = & (\omega_{22}^1+\omega_{33}^1)^2-2 (\omega_{22}^1)^2 (\omega_{33}^1)^2 \nonumber\\
  = & f_2^2  -\frac{1}{2}( f_1^2-f_2 )^2 \nonumber\\
  = & \frac 12 (f_2^2+2f_1^2f_2-f_1^4). \label{ff4}
\end{align}
Therefore, \eqref{ff3} and \eqref{ff4} give \eqref{f123} and
\eqref{f124}  respectively.
\end{proof}

By Lemma \ref{f1-5} and Lemma \ref{FHZ-n3}, we can get the following
lemma.
\begin{lemma}\label{lemma-e23T}
Let $\phi: M^3\rightarrow N^4(c)$ be an orientable proper
biconservative hypersurface with constant scalar curvature. Then we
have $e_i(T)=0$ for $i=2,3$.
\end{lemma}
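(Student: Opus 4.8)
The plan is to prove the stronger statement that $e_a(\omega_{ii}^1)=0$ for every $a\in\{2,3\}$ and every $i\in\{2,3\}$, from which $e_a(T)=e_a(\omega_{22}^1)+e_a(\omega_{33}^1)=0$ follows immediately. The whole argument rests on rewriting each $\omega_{ii}^1$ through the structural identity \eqref{BICONSERVATIVE-1} as $\omega_{ii}^1=e_1(\lambda_i)/(\lambda_i-\lambda)$ (legitimate wherever $\lambda_i\neq\lambda$), so that differentiating this quotient along the transverse direction $e_a$ reduces the whole claim to two ingredients: that $\lambda_2,\lambda_3$ are constant along $e_2,e_3$, and that $e_1$ is a geodesic direction.

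First I would record that $e_1$ is geodesic, i.e. $\nabla_{e_1}e_1=0$. This comes from Codazzi \eqref{Codazzi-1} taken with $j=1$ and $i\geq 2$: the left-hand side $e_i(\lambda)$ vanishes by \eqref{H1}, and $\lambda_i\neq\lambda$, which forces $\omega_{1i}^1=0$ and hence $\omega_{11}^i=0$ via \eqref{compatibility}. Next I would exploit the constancy of $R$ in an essential way. By \eqref{sum1} and \eqref{sum4} with $n=3$ one has $\lambda_2+\lambda_3=-3\lambda$ and $\lambda_2^2+\lambda_3^2=6c+3\lambda^2-R$, both of which depend on $\lambda$ alone. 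Applying $e_a$ and using $e_a(\lambda)=0$ produces the homogeneous linear system $e_a(\lambda_2)+e_a(\lambda_3)=0$ and $\lambda_2 e_a(\lambda_2)+\lambda_3 e_a(\lambda_3)=0$, whose determinant is $\lambda_3-\lambda_2$; where the three principal curvatures are distinct this forces $e_a(\lambda_2)=e_a(\lambda_3)=0$.

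The core computation is then a commutation identity. Writing $e_a e_1(\lambda_i)=[e_a,e_1](\lambda_i)+e_1 e_a(\lambda_i)$, the second term vanishes since $e_a(\lambda_i)=0$. Expanding $[e_a,e_1]=\nabla_{e_a}e_1-\nabla_{e_1}e_a$ in the frame, its $e_1$-component equals $\omega_{a1}^1-\omega_{1a}^1$, which vanishes by \eqref{compatibility} together with $\nabla_{e_1}e_1=0$, while its remaining components are paired only with $e_2(\lambda_i)=e_3(\lambda_i)=0$. Hence $[e_a,e_1](\lambda_i)=0$ and so $e_a e_1(\lambda_i)=0$. Finally, differentiating $\omega_{ii}^1=e_1(\lambda_i)/(\lambda_i-\lambda)$ along $e_a$, the terms coming from the denominator drop out because $e_a(\lambda_i)=e_a(\lambda)=0$, leaving $e_a(\omega_{ii}^1)=e_a e_1(\lambda_i)/(\lambda_i-\lambda)=0$; summing over $i=2,3$ gives $e_a(T)=0$.

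I expect the one genuine obstacle to be the degenerate locus $\lambda_2=\lambda_3$, where the linear system above is singular and the transverse frame is no longer canonically determined. There, however, \eqref{sum1} forces $\lambda_2=\lambda_3=-\tfrac32\lambda$, a function of $\lambda$ alone, so $e_a(\lambda_2)=e_a(\lambda_3)=0$ holds directly, and $T=2e_1(\lambda_2)/(\lambda_2-\lambda)$ becomes a rotation-invariant function of $\lambda$ and $\lambda'=e_1(\lambda)$; the same commutation identity gives $e_a(\lambda')=[e_a,e_1](\lambda)=0$, so again $e_a(T)=0$. Since the number of distinct principal curvatures is locally constant on $U_p$, the two strata together yield $e_2(T)=e_3(T)=0$ throughout. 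These transverse manipulations are precisely the counterpart of the $e_1$-derivative relations packaged in Lemma \ref{f1-5} and Lemma \ref{FHZ-n3}.
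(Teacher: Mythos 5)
Your proof is correct, but it is a genuinely different argument from the one in the paper. The paper never touches the individual coefficients $\omega_{ii}^1$ transversally: it substitutes the closed forms \eqref{L6} of $f_1,\dots,f_4$ into the Newton-type identities \eqref{f123}--\eqref{f124} of Lemma \ref{FHZ-n3}, then differentiates along $e_1$ and eliminates until either $T$ is exhibited as a rational expression in $\lambda,\lambda',\lambda'',\lambda'''$ alone (whence $e_i(T)=0$, since $e_i$ kills $\lambda$ and, by the same commutation identity you use, all of its $e_1$-derivatives) or $\lambda$ is forced to be constant, a contradiction. You instead prove the stronger pointwise statement $e_a(\omega_{ii}^1)=0$ directly, by combining three ingredients: geodesy of $e_1$ (correctly extracted from \eqref{Codazzi-1} and \eqref{compatibility}), the vanishing $e_a(\lambda_2)=e_a(\lambda_3)=0$ obtained from the linear system coming from \eqref{sum1} and \eqref{sum4} with $R$ constant, and the commutator identity $e_ae_1(\lambda_i)=[e_a,e_1](\lambda_i)=0$; the quotient rule applied to $\omega_{ii}^1=e_1(\lambda_i)/(\lambda_i-\lambda)$ then finishes. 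What each approach buys: yours is shorter, avoids the heavy elimination entirely, and yields more (transverse constancy of each $\omega_{ii}^1$, not just of their sum); note, though, that your second ingredient is exactly the paper's Lemma \ref{lambda-23}, which the authors state \emph{after} Lemma \ref{lemma-e23T} and derive by a different route that does not use constancy of $R$ -- under the constant-$R$ hypothesis your derivation of it is legitimate and simpler. The paper's elimination, by contrast, needs only $e_a(\lambda)=0$ as transverse input and, more importantly, sets up the algebraic-ODE machinery that is reused essentially verbatim in the much harder $n=4$ case and in the final contradiction arguments. Two small points you should make explicit: the inversion $\omega_{ii}^1=e_1(\lambda_i)/(\lambda_i-\lambda)$ and the geodesy argument both require $\lambda_i\neq\lambda$ for $i\geq2$, which follows from \eqref{BICONSERVATIVE-1} (if $\lambda_i\equiv\lambda$ on an open set then $e_1(\lambda_i)=0\neq e_1(\lambda)$); and your degenerate stratum $\lambda_2=\lambda_3$ is in fact vacuous here, since \eqref{sum1} and \eqref{sum4} then force $\lambda^2$ constant, contradicting properness -- your treatment of it is harmless but unnecessary.
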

\begin{proof}
Assume that $T\neq0$. Substituting $f_1,f_2,f_3,f_4$ of \eqref{L6}
into \eqref{f123} and \eqref{f124} we obtain two equations
\begin{align}
   & T''-3TT' +T^3 +(4c - 11\lambda^2) T+12\lambda\lambda'=0, \label{n31}\\
   & \frac 13 T''' -T'^2 + (\frac {4}{3}c -\frac{26}{3}\lambda^2 -2T^2 )T'+T^4 + (4c -6\lambda^2) T^2\label{n32}\\
   &  -\frac 73 \lambda\lambda'T -9\lambda^4 -\lambda^2 R + 11c\lambda^2  +7\lambda\lambda''+4\lambda'^2 =0.\nonumber
\end{align}
Differentiating \eqref{n31} with respect to $e_1$ and using
\eqref{n31}-\eqref{n32}, we can eliminate $T''', T''$ and get
\begin{align}
   5\lambda T'  -5\lambda T^2 +7\lambda' T
   +9\lambda^3 +\lambda R  -11 c\lambda   -3 \lambda''=0.\label{n33}
\end{align}
Note that $\lambda\not\equiv 0$. Differentiating \eqref{n33} with
respect to $e_1$ and using \eqref{n31}, one has
\begin{align}
   & (5 \lambda T  + 12 \lambda') T'- 5 \lambda T^3 - 5 \lambda' T^2
   + 7 \lambda'' T  +55 \lambda^3 T  \label{n34} \\
   &  - 20 \lambda T c  +  R \lambda' - 33 \lambda^2 \lambda'  - 11 c\lambda'  - 3 \lambda''' =0.\nonumber
\end{align}
Suppose that $5 \lambda T  + 12 \lambda'=0$, then $T=-\frac {12
\lambda'}{5\lambda} $, in this case the lemma follows immediately.
Assume that $5 \lambda T  + 12 \lambda' \neq 0$. Combining
\eqref{n33} with \eqref{n34} gives
\begin{align}\label{n35}
  p_1 T+p_2=0,
\end{align}
where
\begin{align*}
  &p_1= 50 \lambda\lambda''  - 84 \lambda'^2+230 \lambda^4  - 5 R \lambda^2   - 45 c \lambda^2    , \nonumber \\
  &p_2=  - 15 \lambda\lambda''' + 36\lambda'\lambda''- 273 \lambda^3 \lambda'  - 7 R\lambda\lambda' + 77 c\lambda\lambda' .
\end{align*}
\textbf{Case A:} $p_1\neq 0$. It follows that $T=-p_2/p_1$, and the lemma follows.\\
\textbf{Case B:} $p_1= 0$. It follows from \eqref{n35} that $p_2=0$.
By applying $p_1= 0$ and $p_2=0$, we can eliminate $\lambda''',
\lambda''$ and get
\begin{align}
   - 252 \lambda'^2 -60 \lambda^4 + 2485 R \lambda^2  - 12635 c \lambda^2 =0. \label{n36}
\end{align}
Differentiating \eqref{n36} along $e_1$ and using $p_1=0$ gives the
following expression:
\begin{align}
  -25980 \lambda^4 - 61495 R\lambda^2  + 321545 c\lambda^2  + 10584 \lambda'^2=0.  \label{n37}
\end{align}
Finally, combining \eqref{n36} with \eqref{n37} gives a non-trivial
quadratic equation of $\lambda$ with constant coefficients as
follows:
\begin{align}
   228 \lambda^2 - 343 R + 1673 c=0. \label{n38}
\end{align}
Hence, $\lambda$ must be a constant, which is impossible.
\end{proof}

By a similar argument as Lemma 3.4 in \cite{fu-hong-zhan2020}, we
have
\begin{lemma}\label{lambda-23}
Let $\phi: M^3 \rightarrow N^4(c)$ be an orientable proper
biconservative hypersurface. Then $e_i(\lambda_j)=0$ for $2\leq i,
j\leq 3$.
\end{lemma}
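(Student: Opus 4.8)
The goal is to show that for a proper biconservative hypersurface $\phi:M^3\to N^4(c)$, the principal directions $e_2,e_3$ annihilate the principal curvatures $\lambda_2,\lambda_3$; equivalently, using \eqref{Codazzi-1}, that the relevant off-diagonal connection coefficients vanish. The plan is to exploit Lemma \ref{lemma-e23T}, which already gives $e_i(T)=0$ for $i=2,3$, and feed this into the connection structure of the frame.

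First I would recall from \eqref{Codazzi-1} that $e_i(\lambda_j)=(\lambda_i-\lambda_j)\omega_{ji}^j$, so it suffices to establish $\omega_{ji}^j=0$ for $2\le i,j\le 3$, $i\ne j$, wherever the corresponding principal curvatures are distinct; where $\lambda_2=\lambda_3$ the conclusion $e_i(\lambda_j)=0$ is immediate after recalling $e_i(\lambda_1)=0$ from \eqref{A2}. Since $T=\omega_{22}^1+\omega_{33}^1$ and $f_2=(\omega_{22}^1)^2+(\omega_{33}^1)^2$ are, by Lemma \ref{lemma-e23T} and its proof (through \eqref{Q2}, which expresses $f_2$ via $T'$, $\lambda$ and constants with $\lambda,T'$ annihilated by $e_2,e_3$), both killed by $e_2$ and $e_3$, the individual coefficients $\omega_{22}^1$ and $\omega_{33}^1$ are themselves annihilated by $e_2,e_3$, being symmetric functions of the two with nonvanishing Jacobian off the diagonal $\omega_{22}^1=\omega_{33}^1$.

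Next I would differentiate the structural relation \eqref{BICONSERVATIVE-1}, namely $e_1(\lambda_i)=(\lambda_i-\lambda_1)\omega_{ii}^1$, in the directions $e_2,e_3$, and commute the derivatives using the curvature identity \eqref{curvature-tensor} together with the Gauss equation \eqref{Gauss-Equation}. The commutator $[e_i,e_1]$ is expressed through the $\omega$'s, and since $e_i(\lambda_1)=0$, $e_i(T)=0$, the resulting identities force the remaining connection coefficients $\omega_{2i}^j$-type terms tied to $e_i(\lambda_j)$ to satisfy a closed first-order system along the $e_1$-flow. Writing $u=e_2(\lambda_2)$ (equivalently the relevant $\omega_{2i}^j$ combination) and tracking its evolution under $e_1$ via \eqref{BICONSERVATIVE-2}-type computations, I expect to obtain a homogeneous linear ODE $e_1(u)=(\text{smooth coefficient})\,u$, whose solution with vanishing Cauchy data yields $u\equiv 0$; the vanishing of the initial/transverse data comes precisely from $e_i(T)=e_i(f_2)=0$.

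The hard part will be the careful bookkeeping of which connection coefficients survive. In dimension three the Codazzi relation \eqref{Codazzi-2} couples $\omega_{23}^3,\omega_{32}^3$ and the like, and one must handle separately the locus where $\lambda_2=\lambda_3$ (where some $\omega$'s are not individually determined) from the open set where they are distinct; on the former, symmetry collapses the system, while on the latter the Codazzi symmetrization pins down each coefficient. I would organize the argument so that the two symmetric functions $T$ and $f_2$ being $e_2,e_3$-constant propagates to each $\omega_{ii}^1$, then push this through the Gauss-commutation identity to kill $e_i(\lambda_j)$. This mirrors the strategy of Lemma 3.4 in \cite{fu-hong-zhan2020}, which the statement explicitly invokes, so the main labor is transcribing that commutator computation into the present frame conventions rather than devising a new mechanism.
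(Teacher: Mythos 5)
Your proposal does not follow the paper's route: the paper offers no in-line proof of Lemma \ref{lambda-23} at all, but instead transcribes Lemma 3.4 of \cite{fu-hong-zhan2020}, which is a direct pointwise Gauss--Codazzi computation (case analysis on the distinct principal curvatures, using \eqref{Codazzi-1}, \eqref{Codazzi-2} and components of the Gauss equation) that uses neither constant scalar curvature nor Lemma \ref{lemma-e23T}. This difference is not cosmetic: Lemma \ref{lambda-23}, like its $n=4$ analogue Lemma \ref{lemma3.4}, is stated for an arbitrary orientable proper biconservative hypersurface, with no hypothesis on $R$. By routing everything through Lemma \ref{lemma-e23T} (and through $e_i(f_2)=0$, which via \eqref{Q2} again needs $R$ constant), you import the assumption of constant scalar curvature and hence prove only a weaker statement than the one asserted. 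For the applications inside this paper that weaker version would suffice, but it is not the lemma as written.

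The more substantive problem is your final step. What you actually derive by differentiating \eqref{BICONSERVATIVE-1} and commuting is a homogeneous linear first-order system $e_1(u)=(\text{smooth})\,u$ for $u=e_i(\lambda_j)$ along the integral curves of $e_1$. Such a system forces $u\equiv 0$ only if $u$ vanishes at some point of each integral curve, and you never produce such a point: the ``vanishing Cauchy data'' you invoke, $e_i(T)=e_i(f_2)=0$, controls the symmetric functions of $\omega_{22}^1,\omega_{33}^1$ (hence, off the locus $\omega_{22}^1=\omega_{33}^1$, gives $e_i(\omega_{jj}^1)=0$), but it gives no information about the value of $e_i(\lambda_j)$ at any single point. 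As it stands the argument does not close. Note that a pointwise finish is available from exactly the data you have: for $n=3$, \eqref{sum1} gives $e_i(\lambda_2)=-e_i(\lambda_3)$, and applying $e_i$ ($i\ge 2$) to \eqref{BICONSERVATIVE-2}, commuting $e_i$ with $e_1$ and using $e_i(\lambda_1)=e_i(\omega_{jj}^1)=0$ yields $0=2\omega_{jj}^1 e_i(\omega_{jj}^1)+\lambda_1 e_i(\lambda_j)=\lambda_1 e_i(\lambda_j)$, so $e_i(\lambda_j)=0$ wherever $\lambda_1\neq 0$ and then everywhere by continuity, since $H$ cannot vanish on an open subset of $U_p$. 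Even with that repair, however, your proof still assumes $R$ constant; to obtain the lemma as stated one needs the unconditional structure-equation argument of \cite{fu-hong-zhan2020} that the paper cites.
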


We are now in a position to prove Theorem \ref{Theorem-1}.

\vspace{0.2cm} \noindent\textbf{The proof of Theorem
\ref{Theorem-1}:}\\

 We need
only to consider two cases:

\textbf{Case 1.} Suppose that $M^3$ has two distinct principal
curvatures. Then one has $\lambda\neq\lambda_2=\lambda_3$. Due to
\eqref{sum1} we have $\lambda_2=\lambda_3=-\frac 32 \lambda$, which
together with \eqref{sum4} implies that $\lambda^2=4c-\frac 23
R=$constant. Therefore, we conclude that the mean curvature
$H=-\frac 23 \lambda$ is also a constant.

\textbf{Case 2.} Suppose that $M^3$ has three distinct principal
curvatures. Combining \eqref{sum1} with \eqref{sum4}, we may
eliminate $\lambda_3$ and get
\begin{equation}\label{Tn31}
  6\lambda^2+6\lambda\lambda_2+2\lambda_2^2-6c+R=0.
\end{equation}
Differentiating \eqref{Tn31} with respect to $e_1$ leads to
\begin{equation}\label{Tn32}
  (6\lambda+3\lambda_2)e_1(\lambda)+(3\lambda+2\lambda_2)e_1(\lambda_2)=0.
\end{equation}
Similarly, we have
\begin{equation}\label{Tn33}
  (6\lambda+3\lambda_3)e_1(\lambda)+(3\lambda+2\lambda_3)e_1(\lambda_3)=0.
\end{equation}
Taking into account \eqref{sum1} we know
$3\lambda+2\lambda_2=\lambda_2-\lambda_3\neq 0$ and
$3\lambda+2\lambda_3=\lambda_3-\lambda_2\neq 0$. Suppose that
$6\lambda+3\lambda_2=0$. Then \eqref{Tn32} implies
$e_1(\lambda_2)=0$. Hence $e_1(\lambda)=-\frac 12 e_1(\lambda_2)=0$,
which shows that $\lambda$ must be a constant. This contradicts to
our assumption. Thus, we get $2\lambda+\lambda_2\neq 0$. In the same
manner we have that $2\lambda+\lambda_3\neq 0$. Differentiating
\eqref{Tn32} along $e_1$ yields
\begin{equation}\label{Tn34}
  e_1e_1(\lambda_2)=-\frac {3(2\lambda+\lambda_2)}{3\lambda+2\lambda_2}e_1e_1(\lambda)
  -\frac{6(3\lambda^2+3\lambda\lambda_2+\lambda_2^2)}{(3\lambda+2\lambda_2)^3}\big(e_1(\lambda)\big)^2.
\end{equation}
Combining \eqref{BICONSERVATIVE-1} with \eqref{BICONSERVATIVE-2}
shows that
\begin{equation*}
  e_1\Big(\frac{e_1(\lambda_2)}{\lambda_2-\lambda}\Big)=\Big(\frac{e_1(\lambda_2)}{\lambda_2-\lambda}\Big)^2+\lambda\lambda_2+c,
\end{equation*}
which further reduces to
\begin{equation}\label{Tn35}
  \frac{e_1e_1(\lambda_2)}{\lambda_2-\lambda}+\frac {e_1(\lambda_2)e_1(\lambda)}{(\lambda_2-\lambda)^2}
  -2\Big(\frac {e_1(\lambda_2)}{\lambda_2-\lambda}\Big)^2-\lambda\lambda_2-c=0.
\end{equation}
By applying \eqref{Tn32} and \eqref{Tn34} to \eqref{Tn35}, we may
eliminate $e_1e_1(\lambda_2), e_1(\lambda_2)$, and finally obtain
\begin{align}\label{Tn36}
   & (\lambda\lambda_2+c)(\lambda-\lambda_2)^2(3\lambda+2\lambda_2)^3 \nonumber\\
   & +9(28\lambda^3+51\lambda^2\lambda_2+30\lambda\lambda_2^2+6\lambda_2^3)\big(e_1(\lambda)\big)^2\nonumber\\
   & +3(\lambda_2-\lambda)(2\lambda+\lambda_2)(3\lambda+2\lambda_2)^2e_1e_1(\lambda)=0.
\end{align}
 Similarly, one has
\begin{align}\label{Tn37}
   & (\lambda\lambda_3+c)(\lambda-\lambda_3)^2(3\lambda+2\lambda_3)^3 \nonumber\\
   & +9(28\lambda^3+51\lambda^2\lambda_3+30\lambda\lambda_3^2+6\lambda_3^3)\big(e_1(\lambda)\big)^2\nonumber\\
   & +3(\lambda_3-\lambda)(2\lambda+\lambda_3)(3\lambda+2\lambda_3)^2e_1e_1(\lambda)=0.
\end{align}
We claim that \begin{equation}\label{Tn38}
  \omega_{22}^1\omega_{33}^1=-\lambda_2\lambda_3-c.
\end{equation}
In fact, using the Gauss equation \eqref{Gauss-Equation} gives
$$\langle R(e_2,e_3)e_2,e_3\rangle=-\lambda_2\lambda_3-c.$$
From the definition \eqref{curvature-tensor} of curvature tensor
$R$, one can deduce that
\begin{align*}
  \langle R(e_2,e_3)e_2,e_3\rangle= & \langle \nabla_{e_2}\nabla_{e_3}e_2-\nabla_{e_3}\nabla_{e_2}e_2-\nabla_{[e_2,e_3]}e_2, e_3\rangle \\
   = & \langle \nabla_{e_2}(\omega_{32}^k e_k)-\nabla_{e_3}(\omega_{22}^k e_k)-\nabla_{(\nabla_{e_2}e_3-\nabla_{e_3}e_2)}e_2,e_3\rangle \\
   =& e_2(\omega_{32}^3)+\omega_{32}^k\omega_{2k}^3-e_3(\omega_{22}^3)-\omega_{22}^k\omega_{3k}^3
     -\omega_{23}^k\omega_{k2}^3+\omega_{32}^k\omega_{k2}^3 \\
     =& -e_2(\omega_{33}^2)+\omega_{32}^1\omega_{21}^3-e_3(\omega_{22}^3)+\omega_{22}^1\omega_{33}^1\\
    & -(\omega_{23}^1\omega_{12}^3 - \omega_{22}^3\omega_{22}^3 )
     +(\omega_{32}^1\omega_{12}^3+\omega_{33}^2\omega_{33}^2 ).
\end{align*}
In view of Lemma \ref{lambda-23} and the Codazzi equation
\eqref{Codazzi-1}, we see at once that $
\omega_{22}^3=\omega_{33}^2=0.$ In addition, it follows from (3-15)
in \cite{fu-hong2018} that $\omega_{23}^1=\omega_{32}^1=0$.
Therefore, we obtain that
\begin{equation*}
  \langle R(e_2,e_3)e_2,e_3\rangle=\omega_{22}^1\omega_{33}^1.
\end{equation*}
Consequently, our claim follows.

Putting $\omega_{ii}^1=\frac{e_1(\lambda_i)}{\lambda_i-\lambda}$
into \eqref{Tn38} gives rise to
\begin{equation}\label{Tn39}
  \frac{e_1(\lambda_2)}{\lambda_2-\lambda}\cdot \frac{e_1(\lambda_3)}{\lambda_3-\lambda}=-\lambda_2\lambda_3-c,
\end{equation}
which together with \eqref{Tn32}--\eqref{Tn33} leads to
\begin{equation}\label{Tn310}
  \big(e_1(\lambda)\big)^2
  =-\frac {(\lambda_2\lambda_3+c)(\lambda_2-\lambda)(\lambda_3-\lambda)(3\lambda+2\lambda_2)(3\lambda+2\lambda_3)}
  {9(2\lambda+\lambda_2)(2\lambda+\lambda_3)}.
\end{equation}
After eliminating $e_1e_1(\lambda)$ between \eqref{Tn36} and
\eqref{Tn37}, using \eqref{Tn310} and
$\lambda_3=-3\lambda-\lambda_2$, we get
\begin{align}\label{Tn311}
   & 3\lambda_2^6+27\lambda\lambda_2^5+(99\lambda^2-4c)\lambda_2^4 +(189\lambda^3 -24c\lambda)\lambda_2^3  \nonumber\\
   & +(196\lambda^4 -55c\lambda^2)\lambda_2^2 +(102\lambda^5-57c\lambda^3)\lambda_2  +24\lambda^6  -20c\lambda^4=0.
\end{align}
Repeating the division algorithm for polynomial to \eqref{Tn31} and
\eqref{Tn311}, we could eliminate $\lambda_2$ and obtain a
non-trivial polynomial equation of $\lambda$ with constant
coefficients as follows:
\begin{align}\label{Tn312}
   & 24\lambda^6 +(176c-28R)\lambda^4 +(196cR-18R^2-528c^2)\lambda^2\nonumber\\
   &    -3R^3+46cR^2-228c^2R +360c^3=0.
\end{align}
Therefore, we derive that $\lambda$ must be a constant, which
contradicts to our assumption. We complete the proof of Theorem
\ref{Theorem-1}.

\section{Biconservative hypersurfaces in $N^5(c)$}

In this section, we will restrict ourselves to biconservative
hypersurfaces with constant scalar curvature in $N^5(c)$. To prove
Theorem \ref{Theorem-2}, we need the following Lemma derived in
\cite{fu-hong-zhan2020}.
\begin{lemma}{\rm (\cite{fu-hong-zhan2020}) }\label{FHZ-n4}
For  $f_k=(\omega_{22}^1)^k+ (\omega_{33}^1)^k+(\omega_{44}^1)^k$,
the functions $f_1,\cdots,f_5$ satisfy:
\begin{align}
    &f_1^4-6f_1^2f_2+3f_2^2+8f_1f_3-6f_4=0, \label{f1234}\\
    &f_1^5-5f_1^3f_2+5f_1^2f_3+5f_2f_3-6f_5=0.\label{f1235}
\end{align}
\end{lemma}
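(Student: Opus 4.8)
The plan is to observe that both \eqref{f1234} and \eqref{f1235} are purely algebraic identities among the power sums of the three numbers $x=\omega_{22}^1$, $y=\omega_{33}^1$, $z=\omega_{44}^1$, carrying no geometric content whatsoever: here $f_k=x^k+y^k+z^k$ is exactly the $k$-th Newton power sum. I would therefore prove the lemma entirely by symmetric-function manipulation, passing through the elementary symmetric functions $\sigma_1=x+y+z$, $\sigma_2=xy+yz+zx$, $\sigma_3=xyz$.

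First I would invert the low-order Newton identities to express the elementary symmetric functions in terms of $f_1,f_2,f_3$:
\begin{align*}
\sigma_1=f_1,\qquad \sigma_2=\tfrac12\big(f_1^2-f_2\big),\qquad \sigma_3=\tfrac16\big(f_1^3-3f_1f_2+2f_3\big).
\end{align*}
Next, since $x,y,z$ are precisely the roots of $t^3-\sigma_1t^2+\sigma_2t-\sigma_3=0$, each of them satisfies $t^k=\sigma_1t^{k-1}-\sigma_2t^{k-2}+\sigma_3t^{k-3}$ for every $k\ge 3$; summing over the three roots yields the closed recurrence $f_k=\sigma_1f_{k-1}-\sigma_2f_{k-2}+\sigma_3f_{k-3}$, valid for all $k\ge 4$. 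Applying this at $k=4$ and $k=5$ and substituting the displayed expressions for $\sigma_1,\sigma_2,\sigma_3$ produces
\begin{align*}
f_4&=\tfrac16f_1^4-f_1^2f_2+\tfrac12f_2^2+\tfrac43f_1f_3,\\
f_5&=\tfrac16f_1^5-\tfrac56f_1^3f_2+\tfrac56f_1^2f_3+\tfrac56f_2f_3.
\end{align*}
Clearing denominators (multiplying through by $6$) and transposing reproduces \eqref{f1234} and \eqref{f1235} verbatim.

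The whole argument is a finite, elementary computation, so there is no genuine obstacle; the only care needed is in tracking the numerical coefficients when the expressions for $\sigma_2$ and $\sigma_3$ are fed into the recurrence, in particular the cross term $\tfrac43f_1f_3$ appearing in $f_4$ and the term $\tfrac56f_2f_3$ in $f_5$ (the $f_1f_2^2$ contributions cancel at the $k=5$ step, which is exactly why \eqref{f1235} contains no such monomial). Since \cite{fu-hong-zhan2020} already records this identity in the same framework, I would simply reproduce the verification above, or equivalently remark that both displays are the Newton-basis expansions of the power sums $f_4$ and $f_5$ of three variables.
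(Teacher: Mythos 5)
Your proposal is correct: the two displays are indeed the Newton--Girard relations for the power sums of the three quantities $\omega_{22}^1,\omega_{33}^1,\omega_{44}^1$, and I checked that the recurrence $f_k=\sigma_1f_{k-1}-\sigma_2f_{k-2}+\sigma_3f_{k-3}$ together with $\sigma_1=f_1$, $\sigma_2=\tfrac12(f_1^2-f_2)$, $\sigma_3=\tfrac16(f_1^3-3f_1f_2+2f_3)$ yields exactly $6f_4=f_1^4-6f_1^2f_2+3f_2^2+8f_1f_3$ and $6f_5=f_1^5-5f_1^3f_2+5f_1^2f_3+5f_2f_3$, including the cancellation of the $f_1f_2^2$ terms at $k=5$. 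The paper itself gives no proof of this lemma, quoting it from the earlier reference; its proof of the two-variable analogue (Lemma \ref{FHZ-n3}) proceeds by ad hoc factorization of $f_3$ and $f_4$, so your systematic route through the elementary symmetric functions and the characteristic-polynomial recurrence is the same kind of purely algebraic argument, just organized more cleanly, and it supplies a complete self-contained verification.
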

By applying Lemma \ref{f1-5} and \ref{FHZ-n4}, we can prove the
following
\begin{lemma}\label{lemma-e234T}
Let $\phi: M^4\rightarrow N^5(c)$ be an orientable proper
biconservative hypersurface with constant scalar curvature. Then
$e_i(T)=0$ for $i=2,3,4$.
\end{lemma}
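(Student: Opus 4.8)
The plan is to imitate the proof of Lemma \ref{lemma-e23T}, replacing the two algebraic identities of Lemma \ref{FHZ-n3} by the identities \eqref{f1234}--\eqref{f1235} of Lemma \ref{FHZ-n4}, which encode that the three connection coefficients $\omega_{22}^1,\omega_{33}^1,\omega_{44}^1$ possess only three independent power sums. Working on the open set where $T\neq 0$ (on $\{T\equiv 0\}$ the conclusion is trivial), I would substitute the closed-form expressions for $f_1,\dots,f_5$ from Lemma \ref{f1-5} (with $n=4$) into \eqref{f1234} and \eqref{f1235}. This produces two differential equations whose unknowns are $T,T',T'',T''',T''''$ together with $\lambda,\lambda',\lambda'',\lambda'''$ and the constants $R,c$, where a prime denotes $e_1$-differentiation. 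Since $f_4$ contributes $T'''$ and $f_5$ contributes $T''''$, the first relation is of order three in $T$ and the second of order four.

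Next I would eliminate the higher $e_1$-derivatives of $T$. Differentiating the order-three identity along $e_1$ and combining it with the order-four identity eliminates $T''''$ and $T'''$; repeating the reduction removes $T''$ and $T'$, exactly in the spirit of how \eqref{n33}--\eqref{n35} were obtained. The aim is to reach a relation of the form $P_1\,T+P_0=0$ in which $P_1$ and $P_0$ are polynomials in $\lambda,\lambda',\lambda'',\lambda''',\dots$ and in $R,c$ alone. If $P_1\neq 0$, then $T=-P_0/P_1$ is expressed entirely through $\lambda$ and its $e_1$-derivatives. If instead $P_1\equiv 0$, then also $P_0\equiv 0$, and the two relations $P_1=P_0=0$ involve only $\lambda$ and its derivatives; differentiating and eliminating as in \eqref{n36}--\eqref{n38} should collapse to a nontrivial polynomial equation in $\lambda$ with constant coefficients, forcing $\lambda$ to be constant and contradicting $e_1(\lambda)\neq 0$ from \eqref{H1}. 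As in Lemma \ref{lemma-e23T}, a few degenerate sub-cases (where a coefficient multiplying $T'$ vanishes, the analogue of $5\lambda T+12\lambda'=0$) may split off and must be checked to yield $T$ as a function of $\lambda$ directly.

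To pass from $T=-P_0/P_1$ to $e_i(T)=0$ for $i=2,3,4$, I would show that $e_i$ annihilates $\lambda$ and all of its $e_1$-derivatives. From \eqref{H1} we have $e_i(\lambda)=0$, and the Codazzi relation together with the compatibility conditions \eqref{compatibility} give $\omega_{1i}^1=0$ whenever $\lambda_i\neq\lambda$, so that $[e_i,e_1]$ has no $e_1$-component when acting on functions constant along $e_2,\dots,e_4$. An induction on $m$ then shows $e_i(\lambda^{(m)})=0$: writing $e_i(\lambda^{(m)})=e_1\!\left(e_i(\lambda^{(m-1)})\right)+[e_i,e_1](\lambda^{(m-1)})$, the first term vanishes by the inductive hypothesis and the bracket contributes only through $e_k(\lambda^{(m-1)})$ with $k\geq 2$, which vanish as well. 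Consequently any rational expression in $\lambda$ and its $e_1$-derivatives is killed by $e_i$, and $e_i(T)=0$ follows. The main obstacle is computational rather than conceptual: because $f_5$ and $T''''$ now enter, the elimination producing $P_1T+P_0=0$ is far longer than in the $n=3$ case and the polynomial in $\lambda$ arising in the branch $P_1\equiv 0$ is of high degree, so the real work is to carry out this elimination (likely with computer algebra) and to verify that the final polynomial is genuinely nontrivial, i.e.\ that its constant coefficients do not all vanish.
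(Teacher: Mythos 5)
Your proposal follows essentially the same route as the paper: substitute the closed forms of $f_1,\dots,f_5$ from Lemma \ref{f1-5} into \eqref{f1234}--\eqref{f1235}, eliminate $T''''$, $T'''$, $T''$, $T'$ successively to reach a linear relation $b_1T+b_2=0$ with coefficients depending only on $\lambda$, $R$, $c$ and the $e_1$-derivatives of $\lambda$, and in the degenerate branch where the coefficients vanish derive a nontrivial constant-coefficient polynomial in $\lambda$, contradicting $e_1(\lambda)\neq 0$. Your closing bracket-induction argument showing that $e_i$ annihilates every $e_1$-derivative of $\lambda$ (hence $e_i(T)=0$ once $T$ is so expressed) is a detail the paper leaves implicit, but otherwise the two arguments coincide.
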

\begin{proof}
Substituting \eqref{L6} into \eqref{f1234} and \eqref{f1235} yields
\begin{align}
  & - T''' + 4 T T''+ 3 T'^2 -( 6 T^2  - 26 \lambda^2  + 10 c) T'
  + T^4 - (26 \lambda^2  - 10 c)T^2 \label{LM1}\\
  &+ 55 \lambda \lambda' T   - 21 \lambda \lambda'' - 12 \lambda'^2
  +27 \lambda^4 + (3R   - 72c)\lambda^2 + 9 c^2=0, \nonumber\\
  &- T''''+ (10 T' + 10 T^2 +50 \lambda^2- 10 c ) T'' - (20 T^3+20 \lambda^2 T + 20 c T \label{LM2}\\
  & - 155 \lambda \lambda' )T' + 4 T^5 + (40  c - 80 \lambda^2 )T^3 + 120 \lambda \lambda' T^2
   + (11 \lambda \lambda''  + 7 \lambda'^2    -84 \lambda^4 \nonumber\\
  &  - 48 c \lambda^2  + 36 c^2)T - 33 \lambda \lambda'''- 45 \lambda' \lambda''
   + (408 \lambda^2 + 10  R  - 252 c ) \lambda \lambda' =0.\nonumber
\end{align}
Using the above two equations, we can eliminate $T'''',~T'''$ and
obtain a second-order differential equation with respect to $T$ as
follows:
\begin{align}\label{LM3}
  &- 6 \lambda T'' + ( 18 \lambda T - 12 \lambda' )T' - 6 \lambda T^3
  + 12 \lambda' T^2 + ( 48\lambda^3  - 10 \lambda'' \\
  &  + 3 R \lambda  - 60 c\lambda  ) T + 3 \lambda''' + (27 c  -  R - 75 \lambda^2 ) \lambda'  =0. \nonumber
\end{align}
By \eqref{LM1} and \eqref{LM3}, it would allow us to eliminate
$T'''$ and $T''$. Then we get the following first-order differential
equation with respect to $T$:
\begin{align}\label{LM4}
  & a_1 T'  - a_1 T^2   + a_2 T+a_3  =0,
\end{align}
where
\begin{align*}
  a_1 =& 36 \lambda'^2 - 22 \lambda \lambda'' - 108 \lambda^4 + 3 R\lambda^2 ,\\
  a_2 =& 30 \lambda' \lambda''  - 13 \lambda \lambda'''  + 93 c\lambda \lambda'
  - 255 \lambda^3 \lambda' - 5 R\lambda \lambda',\\
  a_3=&  3 \lambda \lambda'''' - 9 \lambda' \lambda'''  + (51 \lambda^2 + 27 c -   R  )\lambda \lambda'' + (147 \lambda^2+ 3 R - 81 c ) \lambda'^2\\
  &    -162 \lambda^6  + (432  c - 18  R )\lambda^4   - 54 \lambda^2 c^2.
\end{align*}
Consider the following cases:

\textbf{Case 1.} $a_1=0$. The equation \eqref{LM4} reduces to $a_2
T+a_3  =0$.

\textbf{Case 1.1.} $a_2\neq 0$. We have $T=-a_3/a_2$, which implies
that  $e_i(T)=0$.

\textbf{Case 1.2.} $a_2=0$. Then $a_1=0$ and $a_2=0$ lead to the
following two equations:
\begin{align}
   & 36 \lambda'^2 - 22 \lambda \lambda'' - 108 \lambda^4 + 3 R\lambda^2=0, \label{CE1}\\
   & 30 \lambda' \lambda''  - 13 \lambda \lambda'''  + 93 c\lambda \lambda'
  - 255 \lambda^3 \lambda' - 5 R\lambda \lambda'=0.\label{CE2}
\end{align}
By \eqref{CE1}--\eqref{CE2}, eliminating $\lambda''',\lambda''$ and
$\lambda'$, we finally obtain a non-trivial polynomial equation of
$\lambda$:
\begin{equation*}
  96 \lambda^2 - 121 R + 1302 c=0.
\end{equation*}
Noting that both $R$ and $c$ are constant, we know that $\lambda$ is
also constant. This is a contradiction.

 \textbf{Case 2.} $a_1\neq0$.
From \eqref{LM3}--\eqref{LM4} we can elimnate the terms of $T'',~T'$
step by step and get
\begin{equation}\label{LM5}
  b_1T+b_2=0,
\end{equation}
where
\begin{align*}
   b_1= &10368 \lambda^4 \lambda'''' - 288 \lambda^2 \lambda'''' R
    + 2112 \lambda \lambda'' \lambda''''    - 3456 \lambda'^2 \lambda'''' \\
    &- 107136 \lambda^3 \lambda' \lambda''' - 972 \lambda \lambda' \lambda''' R
     + 26784 \lambda \lambda' \lambda''' c- 2730 \lambda \lambda'''^2 \\
    &    + 12540 \lambda' \lambda'' \lambda''' + 288360 \lambda^6 \lambda''
    + 9108 \lambda^4 \lambda'' R - 270864 \lambda^4 \lambda'' c \\
    & - 3336 \lambda^3 \lambda''^2  + 330516 \lambda^2 \lambda'^2 \lambda''
     - 558 \lambda^2 \lambda'' R^2 + 9108 \lambda^2 \lambda'' R c \\
    & - 7128 \lambda^2 \lambda'' c^2+ 3840 \lambda \lambda''^2 R
    - 37752 \lambda \lambda''^2 c - 2100 \lambda'^2 \lambda'' R \\
    &+ 396 \lambda'^2 \lambda'' c    - 8800 \lambda''^3
    - 467910 \lambda^5 \lambda'^2   - 42084 \lambda^3 \lambda'^2 R  \\
    & + 665604 \lambda^3 \lambda'^2 c - 244944 \lambda \lambda'^4
     + 354 \lambda \lambda'^2 R^2 - 4248 \lambda \lambda'^2 R c\\
    & - 40230 \lambda \lambda'^2 c^2 + 454896 \lambda^9
     - 4860 \lambda^7 R - 419904 \lambda^7 c   - 1188 \lambda^5 R^2\\
    &    + 31104 \lambda^5 R c - 34992 \lambda^5 c^2
     + 27 \lambda^3 R^3 - 540 \lambda^3 R^2 c + 972 \lambda^3 R c^2,\\
   b_2= &648 \lambda'^2 \lambda''''' + 54 \lambda^2 \lambda''''' R
   - 1944 \lambda^4 \lambda''''' - 396 \lambda \lambda'' \lambda''''' \\
    & + 12366 \lambda^3 \lambda' \lambda''''- 18 \lambda \lambda' \lambda'''' R
    - 1674 \lambda \lambda' \lambda'''' c+ 630 \lambda \lambda''' \lambda''''\\
    &- 1440 \lambda' \lambda'' \lambda''''  - 32076 \lambda^6 \lambda'''
    - 4158 \lambda^4 \lambda''' R  + 73224 \lambda^4 \lambda''' c \\
    & + 24066 \lambda^3 \lambda'' \lambda''' - 6876 \lambda^2 \lambda'^2 \lambda'''
     + 9 \lambda^2 \lambda''' R^2 + 486 \lambda^2 \lambda''' R c\\
     &- 11340 \lambda^2 \lambda''' c^2- 636 \lambda \lambda'' \lambda''' R
      + 2106 \lambda \lambda'' \lambda''' c+ 792 \lambda'^2 \lambda''' R \\
    & - 6156 \lambda'^2 \lambda''' c  - 1890 \lambda' \lambda'''^2
     + 2640 \lambda''^2 \lambda'''- 253098 \lambda^5 \lambda' \lambda'' \\
    & + 31554 \lambda^3 \lambda' \lambda'' R - 285876 \lambda^3 \lambda' \lambda'' c
     - 133248 \lambda^2 \lambda' \lambda''^2+ 89208 \lambda \lambda'^3 \lambda'' \\
     & + 192 \lambda \lambda' \lambda'' R^2 - 4626 \lambda \lambda' \lambda'' R c
     + 39366 \lambda \lambda' \lambda'' c^2   - 400 \lambda' \lambda''^2 R \\
     & + 10800 \lambda' \lambda''^2 c- 702756 \lambda^8 \lambda'
     + 10368 \lambda^6 \lambda' R + 506412 \lambda^6 \lambda' c \\
     & + 528174 \lambda^4 \lambda'^3  - 1863 \lambda^4 \lambda' R^2
      + 36612 \lambda^4 \lambda' R c- 323676 \lambda^4 \lambda' c^2 \\
      & - 13572 \lambda^2 \lambda'^3 R + 38988 \lambda^2 \lambda'^3 c
       - 9 \lambda^2 \lambda' R^3  + 243 \lambda^2 \lambda' R^2 c \\
      & - 3564 \lambda^2 \lambda' R c^2 + 30132 \lambda^2 \lambda' c^3
      + 29808 \lambda'^5 - 126 \lambda'^3 R^2\\
      & + 1728 \lambda'^3 R c - 1458 \lambda'^3 c^2.
\end{align*}

If $b_1=0$ and $b_2=0$, then we may eliminate the terms of
$\lambda'''''$, $\lambda''''$, $\lambda'''$, $\lambda''$, $\lambda'$
item by item, and finally obtain a non-trivial polynomial equation
concerning $\lambda$ with constant coefficients. This shows that
$\lambda$ is a constant and contradicts to our assumption. Hence the
lemma follows.
\end{proof}

According to Lemma 3.4 in \cite{fu-hong-zhan2020}, the similar proof
remains valid for the case that the number of the principal
curvatures is 2, 3 or 4. Summarizing, we have the following lemma.
\begin{lemma}\label{lemma3.4}
Let $\phi: M^4 \rightarrow N^5(c)$ be an orientable proper
biconservative hypersurface. Then $e_i(\lambda_j)=0$ for $2\leq i,
j\leq 4$.
\end{lemma}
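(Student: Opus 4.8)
The plan is to adapt the argument of Lemma 3.4 in \cite{fu-hong-zhan2020} to the present constant-scalar-curvature setting, organizing everything according to the number of distinct principal curvatures of $M^4$, which is constant under our standing hypotheses. Throughout I would use the two facts already available: $e_i(\lambda_1)=0$ for $i\ge2$ from \eqref{H1}, and $e_i(T)=0$ for $i\ge2$ from Lemma \ref{lemma-e234T}. First consider the case of exactly two distinct principal curvatures. A multiplicity-$(2,2)$ splitting would force $\lambda_1^2$ to be constant through \eqref{sum4}, contradicting properness, so $\lambda_1$ is simple and $\lambda_2=\lambda_3=\lambda_4$; then \eqref{sum1} gives $\lambda_2=\lambda_3=\lambda_4=-\lambda_1$, whence $e_i(\lambda_j)=-e_i(\lambda_1)=0$ for all $i,j\ge2$. (Here \eqref{sum4} additionally yields $R=12c$, in agreement with Theorem \ref{Theorem-2}.) Thus only three and four distinct principal curvatures remain.

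For those cases I would first reduce to an off-diagonal statement. For distinct $i,j\in\{2,3,4\}$, the Codazzi equation \eqref{Codazzi-1} reads $e_i(\lambda_j)=(\lambda_i-\lambda_j)\omega_{ji}^j$; hence $e_i(\lambda_j)=0$ automatically when $\lambda_i=\lambda_j$, and otherwise it suffices to prove $\omega_{ji}^j=0$. To show the latter, the idea is to differentiate the relation $e_i(\lambda_j)=(\lambda_i-\lambda_j)\omega_{ji}^j$ along $e_1$, commuting $e_1$ past $e_i$ by means of the bracket $[e_1,e_i]$, whose frame components are controlled by \eqref{compatibility} together with the vanishing $\omega_{11}^i=0$ (itself read off from $e_i(\lambda_1)=0$ via \eqref{Codazzi-1} when $\lambda_i\ne\lambda_1$), and then substituting the Riccati identity \eqref{BICONSERVATIVE-2} and \eqref{BICONSERVATIVE-1}. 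Coupling this with the second Codazzi relation \eqref{Codazzi-2}, which links $\omega_{1i}^j,\omega_{i1}^j$ and $\omega_{ji}^j$, and with the off-diagonal Gauss identity $\langle R(e_1,e_i)e_1,e_j\rangle=0$ coming from \eqref{Gauss-Equation}, I expect to obtain a homogeneous linear system in the off-diagonal connection coefficients whose coefficient matrix is nonsingular precisely because $\lambda_i\ne\lambda_j$. This forces $\omega_{ji}^j=0$, and therefore $e_i(\lambda_j)=0$ for all distinct $i,j\ge2$.

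The diagonal derivatives then come for free: applying $e_i$ ($i\ge2$) to \eqref{sum1} gives $\sum_{j=2}^4 e_i(\lambda_j)=-3e_i(\lambda_1)=0$, and since every off-diagonal term vanishes we conclude $e_i(\lambda_i)=0$ as well, completing the proof. The hard part will be the bookkeeping in the three- and four-distinct-curvature cases: one must track which of $\lambda_1,\dots,\lambda_4$ coincide, verify in each sub-case that the determinants governing the above linear system are nonzero—this is exactly where distinctness is used—and confirm that only the trivial solution survives. These computations run parallel to those in \cite{fu-hong-zhan2020}, but must be carried out with the constant-scalar-curvature identities \eqref{sum1} and \eqref{sum4} replacing the biharmonic constraints used there.
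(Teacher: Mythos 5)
First, a caveat on the comparison: the paper does not actually write out a proof of this lemma --- it simply asserts that the argument of Lemma 3.4 in \cite{fu-hong-zhan2020} ``remains valid'' case by case according to the number of distinct principal curvatures. Your outline has the same overall shape (case analysis on multiplicities, reduction via Codazzi to killing certain connection coefficients), and your treatment of the two-distinct-curvature case and of the final diagonal step $e_i(\lambda_i)=0$ is correct. But the heart of the lemma --- the three- and four-distinct-curvature cases --- is not actually proved in your proposal. You write that you ``expect to obtain a homogeneous linear system in the off-diagonal connection coefficients whose coefficient matrix is nonsingular''; this is precisely the content of the lemma and cannot be left as an expectation. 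Worse, the ingredients you list do not obviously close up on the right unknowns: the quantity you must annihilate is $\omega_{ji}^j=-\omega_{jj}^i$ for $i,j\ge 2$, whereas the second Codazzi relation \eqref{Codazzi-2} with $k=1$ and the Gauss identity $\langle R(e_1,e_i)e_1,e_j\rangle=0$ govern the coefficients $\omega_{1i}^j$ and $\omega_{i1}^j$, which are different objects. You would instead need identities of the type $\langle R(e_i,e_j)e_j,e_1\rangle=0$, and the standard derivation that $\omega_{ij}^1=0$ for $2\le i\ne j\le 4$ (equation (3-15) of \cite{fu-hong2018}, which you would want when commuting $e_1$ past $e_i$) itself presupposes $e_i(\lambda_j)=0$, so your commutator step risks circularity.

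Two concrete repairs. For three distinct principal curvatures, say $\lambda_2=\lambda_3\ne\lambda_4$ with all $\lambda_j\ne\lambda_1$: since $\lambda_2\equiv\lambda_3$ as functions, $e_i(\lambda_2)=e_i(\lambda_3)$, and applying $e_i$ ($i\ge2$) to \eqref{sum1} and to \eqref{sum4} (the latter is where constant scalar curvature and $e_i(\lambda_1)=0$ enter) gives $2e_i(\lambda_2)+e_i(\lambda_4)=0$ and $2\lambda_2e_i(\lambda_2)+\lambda_4e_i(\lambda_4)=0$, a $2\times2$ homogeneous system with determinant $2(\lambda_4-\lambda_2)\ne0$; no Riccati or Gauss-equation input is needed at all. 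The genuinely hard case is four distinct principal curvatures, where these two relations leave a one-parameter family of possible $(e_i(\lambda_2),e_i(\lambda_3),e_i(\lambda_4))$ and a third independent linear relation must be manufactured --- this is what requires Lemma \ref{lemma-e234T} (or, in \cite{fu-hong-zhan2020}, the corresponding $e_i(T)=0$ statement) together with the Gauss--Codazzi identities, and it is exactly the computation your proposal defers. As written, the proposal is an accurate plan but not a proof; the four-distinct-curvature case remains a gap.
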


Using Lemma \ref{lemma3.4}, the following results hold.
\begin{lemma}{\rm(c.f. \cite{fu-hong2018})}\label{lemma4.1}
If $\lambda_2$, $\lambda_3$ and $\lambda_4$ are different from each
other, then we have
\begin{align}
&\omega_{23}^4(\lambda_3-\lambda_4)=\omega_{32}^4(\lambda_2-\lambda_4)=\omega_{43}^2(\lambda_3-\lambda_2),\label{F1}\\
&\omega_{23}^4\omega_{32}^4+\omega_{34}^2\omega_{43}^2+\omega_{24}^3\omega_{42}^3=0,\label{F2}\\
&\omega_{23}^4(\omega_{33}^1-\omega_{44}^1)=\omega_{32}^4(\omega_{22}^1-\omega_{44}^1)=
\omega_{43}^2(\omega_{33}^1-\omega_{22}^1). \label{F3}
\end{align}
\end{lemma}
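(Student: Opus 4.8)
The plan is to read off all three identities from the structure equations, treating \eqref{F1} as a pure Codazzi consequence and \eqref{F3} as a Gauss consequence, with \eqref{F2} falling out algebraically from \eqref{F1}.

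First I would prove \eqref{F1} directly from the Codazzi equation \eqref{Codazzi-2}. Applying $(\lambda_i-\lambda_j)\omega_{ki}^j=(\lambda_k-\lambda_j)\omega_{ik}^j$ to the three choices $j\in\{2,3,4\}$ with $\{i,k\}$ the remaining pair, and then rewriting the partner coefficients through the skew-symmetry $\omega_{kj}^i=-\omega_{ki}^j$ furnished by the compatibility relations \eqref{compatibility}, converts each case into one of the equalities in \eqref{F1}. For instance, $j=4$, $(i,k)=(2,3)$ yields $\omega_{23}^4(\lambda_3-\lambda_4)=\omega_{32}^4(\lambda_2-\lambda_4)$ at once, while $j=2$, $(i,k)=(3,4)$ together with $\omega_{34}^2=-\omega_{32}^4$ gives the second equality. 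Since $\lambda_2,\lambda_3,\lambda_4$ are distinct, the three quantities in \eqref{F1} share a common value, which I will denote $K$.

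Next I would obtain \eqref{F3} from the Gauss equation. Because $A$ is diagonal in the frame $\{e_1,\dots,e_4\}$ and the triple $\{2,3,4\}$ has no repeated index, \eqref{Gauss-Equation} forces $\langle R(e_i,e_j)e_k,e_1\rangle=0$ for $\{i,j,k\}=\{2,3,4\}$. Expanding the left-hand side through \eqref{curvature-tensor} and $\nabla_{e_i}e_j=\sum_k\omega_{ij}^k e_k$, I would discard every term carrying an off-diagonal coefficient $\omega_{ab}^1$ with $a\neq b$ (these vanish, exactly as recorded for the $n=3$ case in \cite{fu-hong2018}, and can be re-derived from \eqref{Codazzi-1}--\eqref{Codazzi-2} together with Lemma \ref{lemma3.4}). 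What survives are precisely the products $\omega_{34}^2\omega_{22}^1$, $\omega_{24}^3\omega_{33}^1$ and $(\omega_{23}^4-\omega_{32}^4)\omega_{44}^1$; after applying skew-symmetry this collapses to $\omega_{23}^4(\omega_{33}^1-\omega_{44}^1)=\omega_{32}^4(\omega_{22}^1-\omega_{44}^1)$, and the companion identities $\langle R(e_3,e_4)e_2,e_1\rangle=0$ and $\langle R(e_2,e_4)e_3,e_1\rangle=0$ supply the remaining equality in \eqref{F3}.

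Finally, \eqref{F2} I would deduce from \eqref{F1} by elimination. Writing $\omega_{23}^4=K/(\lambda_3-\lambda_4)$, $\omega_{32}^4=K/(\lambda_2-\lambda_4)$, $\omega_{43}^2=K/(\lambda_3-\lambda_2)$ and rewriting the three products in \eqref{F2} via skew-symmetry as $\omega_{23}^4\omega_{32}^4-\omega_{32}^4\omega_{43}^2+\omega_{23}^4\omega_{43}^2$, this expression equals $K^2$ divided by $(\lambda_3-\lambda_4)(\lambda_2-\lambda_4)(\lambda_3-\lambda_2)$ times the factor $(\lambda_3-\lambda_2)-(\lambda_3-\lambda_4)+(\lambda_2-\lambda_4)$, and that last factor vanishes identically, giving \eqref{F2}; alternatively one can read \eqref{F2} off the diagonal Gauss component $\langle R(e_3,e_4)e_3,e_4\rangle=-(\lambda_3\lambda_4+c)$. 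I expect the only genuine work to be the bookkeeping in the Gauss expansion for \eqref{F3} — in particular, justifying the vanishing of the off-diagonal $\omega_{ab}^1$ so that the many cross terms drop out — whereas \eqref{F1} and \eqref{F2} are essentially algebraic.
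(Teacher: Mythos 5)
The paper itself gives no proof of this lemma---it is quoted from \cite{fu-hong2018}---so your proposal has to stand on its own, and in substance it does: it reconstructs the standard derivation. Identity \eqref{F1} from \eqref{Codazzi-2} combined with the skew-symmetry in \eqref{compatibility} is correct; the expansion of $\langle R(e_2,e_3)e_4,e_1\rangle=0$ and its permutations does collapse to \eqref{F3} once the off-diagonal coefficients $\omega_{ab}^1$ ($a\neq b$, $a,b\geq2$) are known to vanish (every term in the sums carries a factor $\omega_{\cdot\cdot}^1$, so only those vanishings plus \eqref{compatibility} are needed); and your algebraic deduction of \eqref{F2} from \eqref{F1} checks out, since the bracket $(\lambda_3-\lambda_2)-(\lambda_3-\lambda_4)+(\lambda_2-\lambda_4)$ is identically zero.

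Two points need repair. First, the vanishing of $\omega_{ab}^1$ for distinct $a,b\in\{2,3,4\}$ does \emph{not} follow from \eqref{Codazzi-1}--\eqref{Codazzi-2} together with Lemma \ref{lemma3.4} alone, as you assert parenthetically: Codazzi \eqref{Codazzi-2} with $(i,j,k)=(a,1,b)$ only gives $(\lambda_a-\lambda_1)\omega_{ba}^1=(\lambda_b-\lambda_1)\omega_{ab}^1$, and Lemma \ref{lemma3.4} via \eqref{Codazzi-1} controls the coefficients $\omega_{aa}^b$, not $\omega_{ab}^1$. The missing ingredient is \eqref{H1}: since $e_a(H)=e_b(H)=0$ while $e_1(H)\neq0$, the identity $[e_a,e_b](H)=0$ forces $\omega_{ab}^1=\omega_{ba}^1$, and this symmetry combined with the Codazzi relation above gives $(\lambda_a-\lambda_b)\omega_{ab}^1=0$, hence $\omega_{ab}^1=0$ because $\lambda_a\neq\lambda_b$; this is precisely what (3-15) of \cite{fu-hong2018} records. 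Second, your alternative route to \eqref{F2} is not right: the single diagonal Gauss component $\langle R(e_3,e_4)e_3,e_4\rangle=-\lambda_3\lambda_4-c$ yields \eqref{F6}, not \eqref{F2}. Keep your elimination argument from \eqref{F1}, which is correct as written.
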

\begin{lemma} {\rm(c.f. \cite{fu-hong2018})}
Under the same hypotheses of Lemma $\ref{lemma4.1}$, one has
\begin{align}
\omega_{22}^1\omega_{33}^1-2\omega_{23}^4\omega_{32}^4=-\lambda_2\lambda_3-c,  \label{F4}\\
\omega_{22}^1\omega_{44}^1-2\omega_{24}^3\omega_{42}^3=-\lambda_2\lambda_4-c, \label{F5}\\
\omega_{33}^1\omega_{44}^1-2\omega_{34}^2\omega_{43}^2=-\lambda_3\lambda_4-c.\label{F6}
\end{align}
\end{lemma}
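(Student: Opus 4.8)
The plan is to compute each sectional curvature $\langle R(e_i,e_j)e_i,e_j\rangle$, for the three pairs $(i,j)\in\{(2,3),(2,4),(3,4)\}$, in two independent ways and to equate the results, exactly as was done for the single relation \eqref{Tn38} in the proof of Theorem \ref{Theorem-1}. The only genuinely new feature in dimension four is that the off-diagonal connection coefficients $\omega_{23}^4,\omega_{32}^4,\omega_{43}^2$ (and their cyclic analogues) need no longer vanish, and it is precisely these that will produce the correction term $-2\omega_{23}^4\omega_{32}^4$ appearing in \eqref{F4}.

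First I would compute the extrinsic (Gauss) side. Substituting $X=Z=e_2$, $Y=e_3$ into \eqref{Gauss-Equation} and using that $A$ is diagonal in the frame $\{e_1,\dots,e_4\}$ gives immediately
\[
\langle R(e_2,e_3)e_2,e_3\rangle=-c-\lambda_2\lambda_3,
\]
and likewise $-c-\lambda_2\lambda_4$ and $-c-\lambda_3\lambda_4$ for the other two pairs. These are the right-hand sides of \eqref{F4}--\eqref{F6}.

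Next I would compute the intrinsic side from the definition \eqref{curvature-tensor}, expanding $\nabla_{e_2}\nabla_{e_3}e_2$, $\nabla_{e_3}\nabla_{e_2}e_2$ and $\nabla_{[e_2,e_3]}e_2$ in the coefficients $\omega_{ij}^k$. The compatibility relations \eqref{compatibility} kill the diagonal coefficients, while Lemma \ref{lemma3.4} together with Codazzi \eqref{Codazzi-1} forces $\omega_{ji}^j=0$ (hence $\omega_{22}^3=\omega_{32}^3=\dots=0$) for distinct $i,j\geq 2$; one also invokes the vanishing $\omega_{23}^1=\omega_{32}^1=0$ established in \cite{fu-hong2018}. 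After these cancellations the surviving contribution is
\[
\langle R(e_2,e_3)e_2,e_3\rangle=\omega_{22}^1\omega_{33}^1+\omega_{32}^4\omega_{24}^3-(\omega_{23}^4-\omega_{32}^4)\omega_{42}^3.
\]

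The main obstacle is the last step: the three mixed terms do not obviously collapse to the clean coefficient $-2$ demanded by \eqref{F4}. Here I would use Lemma \ref{lemma4.1}: the antisymmetry $\omega_{24}^3=-\omega_{23}^4$ and $\omega_{42}^3=-\omega_{43}^2$ rewrites the mixed part, and relation \eqref{F2} (equivalently \eqref{F1}, after expressing $\omega_{43}^2$ through $\omega_{23}^4,\omega_{32}^4$) yields the key identity
\[
(\omega_{23}^4-\omega_{32}^4)\omega_{43}^2=-\omega_{23}^4\omega_{32}^4,
\]
so that the intrinsic side becomes $\omega_{22}^1\omega_{33}^1-2\omega_{23}^4\omega_{32}^4$. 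Equating it with the Gauss side gives \eqref{F4}, and \eqref{F5}--\eqref{F6} then follow verbatim after cyclically permuting the indices $2,3,4$. The delicate point throughout is the bookkeeping of the many connection coefficients and the correct application of \eqref{F2} to land the factor $2$ rather than the $1$ one would naively obtain in the absence of the off-diagonal terms, as in the three-dimensional relation \eqref{Tn38}.
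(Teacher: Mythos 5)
Your argument is correct and is essentially the approach the paper itself takes: the paper only cites \cite{fu-hong2018} for this lemma, but your computation is exactly the four-dimensional extension of the paper's own derivation of \eqref{Tn38}, with the Gauss equation giving $-\lambda_2\lambda_3-c$ on one side and the frame expansion of \eqref{curvature-tensor} on the other. The one step you flag as delicate does check out: by the antisymmetry \eqref{compatibility}, relation \eqref{F2} rewrites as $\omega_{23}^4\omega_{32}^4+(\omega_{23}^4-\omega_{32}^4)\omega_{43}^2=0$, which turns your intermediate expression into $\omega_{22}^1\omega_{33}^1-2\omega_{23}^4\omega_{32}^4$ as required.
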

\vspace{0.2cm} \noindent{\bf The proof of Theorem
\ref{Theorem-2}:}\\

For $n=4$, combining \eqref{sum4} with \eqref{sum1} gives
\begin{equation}\label{sum5}
  \lambda_2\lambda_3+\lambda_2\lambda_4+\lambda_3\lambda_4
  =\frac 12 \Big\{\big(\sum_{i=2}^4 \lambda_i\big)^2-\sum_{i=2}^4 \lambda_i^2\Big\}=\frac 12 R+3\lambda^2-6c.
\end{equation}
Using arithmetic geometric mean inequality, it follows that
$$3\sum_{i=2}^4 \lambda_i^2\geq \big(\sum_{i=2}^4\lambda_i \big)^2,$$
which together with \eqref{sum1} and \eqref{sum4} gives
\begin{equation}\label{sum6}
  12c-R\geq 0,
\end{equation}
where the equality holds if and only if
$\lambda_2=\lambda_3=\lambda_4.$

Based on discussing the number of distinct principal curvatures, in
what follows we need to consider three cases.

\textbf{Case 1.} Assume that $M^4$ is a biconservative hypersurface
with two distinct principal curvatures in $N^5(c)$. Then
$\lambda=-2H$ and $\lambda_2 =\lambda_3 =\lambda_4=2H.$ In this
case, the scalar curvature $R$ satisfies $R=12c$. Applying a
well-known result of do Carmo and Dajczer (Theorem 4.2 of
\cite{doCarmo-Dajczer1983}), one see immediately that $M^4$ is
contained in a rotational hypersurface in $N^5(c)$.

\textbf{Case 2.} Suppose that $M^4$ is a proper biconservative
hypersurface with three distinct principal curvatures and proper
biconservative. We may assume that $\mu:=\lambda_2=\lambda_3\neq
\lambda_4$.  According to \eqref{sum1} and \eqref{sum4}, we have
\begin{align}
  2\mu+\lambda_4 =& -3\lambda ,\label{PF1}\\
  2\mu^2+\lambda_4^2 =& 12c+3\lambda^2-R.\label{PF2}
\end{align}
Eliminating $\lambda_4$ from \eqref{PF1} and \eqref{PF2}, one has
\begin{equation}\label{PF3}
  (\lambda+\mu)^2=2c-\frac 16 R.
\end{equation}
Since the scalar curvature $R$ is constant, we have
$e_1(\mu)=-e_1(\lambda).$ It follows from \eqref{BICONSERVATIVE-1}
and \eqref{BICONSERVATIVE-2} that
\begin{align}
    e_1\Big(\frac {e_1(\mu)}{\mu-\lambda}\Big)
   &=\Big(\frac {e_1(\mu)}{\mu-\lambda}\Big)^2+\lambda\mu+c,\label{PF4}\\
   e_1\Big(\frac {e_1(\lambda_4)}{\lambda_4-\lambda}\Big)
    &=\Big(\frac {e_1(\lambda_4)}{\lambda_4-\lambda}\Big)^2+\lambda\lambda_4+c.\label{PF5}
\end{align}
Using $e_1(\mu)=-e_1(\lambda)$ in \eqref{PF4}, we check at once that
\begin{equation}\label{PF6}
  (\lambda-\mu)e_1e_1(\lambda)-3\big(e_1(\lambda)\big)^2=(\lambda\mu+c)(\lambda-\mu)^2.
\end{equation}
Eliminating $\lambda_4$ between \eqref{PF1} and \eqref{PF5} and
using the fact $e_1(\mu)=-e_1(\lambda)$ again, it follows
immediately that
\begin{equation}\label{PF7}
  (4\lambda+2\mu)e_1e_1(\lambda)-3\big(e_1(\lambda)\big)^2=-(3\lambda^2+2\lambda\mu-c)(4\lambda+2\mu)^2.
\end{equation}
Noting that $\lambda,~\mu$ and $\lambda_4$ are entirely different
from each other, we conclude from \eqref{PF1} that $\lambda-\mu$ and
$4\lambda+2\mu$ can not vanish in same neighborhood. Then, we can
eliminate the terms of $e_1e_1(\lambda)$ and obtain
\begin{equation}\label{PF8}
  3(\lambda+\mu)\big(e_1(\lambda)\big)^2=- (4\lambda+2\mu)(\lambda+\mu)(\lambda-\mu)(4\lambda^2+\lambda\mu-c).
\end{equation}
Observing that $M^4$ has three distinct principal curvatures, we
deduce that $\lambda+\mu\neq 0$. Hence,
\begin{equation}\label{PF9}
 \big(e_1(\lambda)\big)^2=-\frac 13 (4\lambda+2\mu)(\lambda-\mu)(4\lambda^2+\lambda\mu-c).
\end{equation}
Similarly to \eqref{Tn38}, it follows from Gauss equation that
\begin{equation}\label{FP10}
  \omega_{22}^1\omega_{44}^1=\omega_{33}^1\omega_{44}^1=-\mu\lambda_4-c,
\end{equation}
which together with \eqref{BICONSERVATIVE-1}, \eqref{PF1} and
$e_1(\mu)=-e_1(\lambda)$ indicates that
\begin{equation}\label{PF11}
  \big(e_1(\lambda)\big)^2= (4\lambda+2\mu)(\lambda-\mu)(2\mu^2+3\lambda\mu-c).
\end{equation}
After eliminating the terms of $ \big(e_1(\lambda)\big)^2$ between
\eqref{PF9} and \eqref{PF11}, one has
\begin{equation}\label{PF12}
  2\lambda^2+5\lambda\mu+3\mu^2-2c=0.
\end{equation}
By \eqref{PF12} and \eqref{PF3}, we can eliminate $\mu^2,~\mu$ and
get a quadratic equation of $\lambda$ as follows:
\begin{equation}\label{PF13}
  2(R-12c)\lambda^2+3R^2-48Rc+192c^2=0.
\end{equation}
This clearly implies that $\lambda$ is a constant, which is  a
contradiction.

\textbf{Case 3.} Consider $M^4$ as a proper biconservative
hypersurface with four distinct principal curvatures in $N^5(c)$. We
will drive a contradiction again.  The proof will be divided into
the following two subcases.

{\bf Case 3.1.}  $\omega_{23}^4\neq0$, $\omega_{32}^4\neq0$, and
$\omega_{43}^2\neq0$. According to Lemma \ref{lemma3.4}, one has
$e_i(\omega_{ii}^1)=0$ and $e_i(\lambda_i)=0$ for $i=2, 3, 4$. Note
that \eqref{F1} and \eqref{F3} reduce to
\begin{align*}
\frac{\omega_{33}^1-\omega_{44}^1}{\lambda_3-\lambda_4}
&=\frac{\omega_{33}^1-\omega_{22}^1}{\lambda_3-\lambda_2}
=\frac{\omega_{44}^1-\omega_{22}^1}{\lambda_4-\lambda_2}:=\alpha,
\end{align*}
where $\alpha$ is a smooth function satisfying $e_i(\alpha)=0$ for
$i=2, 3, 4$. Hence there exists another smooth function $\beta$
satisfying $e_i(\beta)=0$ such that
\begin{align}
\omega_{ii}^1=\alpha \lambda_i+\beta\label{R1},\quad i=2, 3, 4.
\end{align}
Differentiating with respect to $e_1$ on both sides of equation
\eqref{R1}, using \eqref{BICONSERVATIVE-1} and
\eqref{BICONSERVATIVE-2} we get
\begin{align}
&e_1(\alpha)=\lambda(\alpha^2+1)+\alpha\beta,\label{R2}\\
&e_1(\beta)=\lambda\alpha\beta+\beta^2+c.\label{R3}
\end{align}
Taking a sum on $i$ in \eqref{R1} and using \eqref{sum1}, one has
\begin{align}
\sum_{i=2}^4\omega_{ii}^1=-3\alpha\lambda+3\beta.\label{R4}
\end{align}
Taking into account \eqref{F2} and \eqref{F4}-\eqref{F6} lead to
\begin{align*}
\omega_{22}^1\omega_{33}^1+\omega_{22}^1\omega_{44}^1+\omega_{33}^1\omega_{44}^1=
-\lambda_2\lambda_3-\lambda_2\lambda_4-\lambda_3\lambda_4-3c,
\end{align*}
 which combining with \eqref{R1} further reduces to
\begin{align}
(1+\alpha^2)(\lambda_2\lambda_3+\lambda_2\lambda_4+\lambda_3\lambda_4)+
2\alpha\beta(\lambda_2+\lambda_3+\lambda_4)+3\beta^2+3c=0.\label{R5}
\end{align}
By substituting \eqref{sum1} and \eqref{sum5} into \eqref{R5}, it
follows that
\begin{align*}
(1+\alpha^2)\times (\frac{1}{2}R+3\lambda^2-6c)-
6\alpha\beta\lambda+3\beta^2+3c=0
\end{align*}
and hence
\begin{align}\label{R6}
 6\beta^2 - 12\lambda\alpha\beta  + (6 \lambda^2 + R - 12 c) \alpha^2
 + 6\lambda^2 + R - 6c=0.
\end{align}
Moreover, differentiating \eqref{sum1} with respect to $e_1$ and
using \eqref{BICONSERVATIVE-1}, we get
\begin{align*}
-3e_1(\lambda)=\sum_{i=2}^4(\lambda_i-\lambda)\omega_{ii}^1.
\end{align*}
Using \eqref{R1}, \eqref{sum1} and \eqref{sum4}, the above equation
becomes
\begin{align}
e_1(\lambda)&=-\frac 13\sum_{i=2}^4(\lambda_i-\lambda)(\alpha
\lambda_i+\beta) \label{R7}\\
&=-\frac 13 \sum_{i=2}^4\big\{\alpha
\lambda_i^2+(\beta-\lambda\alpha)\lambda_i-\beta\lambda\big\}\nonumber\\
&=-\frac 13 \big\{\alpha(12c+3\lambda^2-R)-3\lambda(\beta-\lambda\alpha)-3\lambda\beta \big\} \nonumber\\
&=\big(\frac 13 R-4c-2\lambda^2\big)\alpha+2\lambda \beta.\nonumber
\end{align}
Differentiating \eqref{R6} with respect to $e_1$ and using
\eqref{R2}--\eqref{R3} and \eqref{R7} one has
\begin{align}
 &6 \beta^3- 18 \lambda\alpha \beta^2 + (18 \lambda^2+ 12 c - R)\alpha^2 \beta
 + 6( \lambda^2 +  c) \beta  \label{R8}\\
 & + (3  R -6 \lambda^2  - 36  c ) \lambda \alpha^3
 + (3 R - 6 \lambda^2  - 42  c ) \lambda\alpha =0.\nonumber
\end{align}
Differentiating \eqref{R8} with respect to $e_1$ and using
\eqref{R2}--\eqref{R3} and \eqref{R7}, then
\begin{align}\label{R9}
& 18 \beta^4 - 72 \alpha \beta^3 \lambda + (108  \lambda^2- 9  R
 + 108 c)\alpha^2 \beta^2 \\
 &+ (12 \lambda^2 + 24 c)\beta^2
 + (24 R- 72 \lambda^2 - 288  c) \alpha^3 \beta \lambda\nonumber\\
& + (11  R- 24  \lambda^2 - 180 c)\alpha \beta \lambda
+(18  \lambda^2 - 3   R + 36 c ) \lambda^2  \alpha^4 \nonumber\\
& + ( R^2 - 24  R c + 144  c^2)\alpha^4
+ (12 \lambda^2 + 24 c)\alpha^2 \lambda^2 \nonumber\\
& + ( R^2 - 27  R c + 180  c^2) \alpha^2    + (3 R - 6 \lambda^2  -
36  c)\lambda^2 + 6 c^2=0.\nonumber
\end{align}
Combining \eqref{R6} and \eqref{R8}, we eliminate the terms of
$\beta^3,~\beta^2$ and derive
\begin{align}\label{S1}
  (R - 12 c) \big\{ (2 \alpha^2  + 1)\beta -4 \alpha^3 \lambda  - 4 \alpha \lambda \big\}=0.
\end{align}
From \eqref{R6} and \eqref{S1} we can continue to eliminate
$\beta^2,~\beta$ and obtain
\begin{align}\label{S2}
   &  6(\alpha^2 + 1)(4\alpha^4 + 12\alpha^2 + 1)\lambda^2 \\
   & +(  4 \alpha^6  + 8\alpha^4 + 5\alpha^2 + 1 )R\nonumber\\
   &- (  48\alpha^6 + 72\alpha^4 + 36\alpha^2 + 6 )c =0.\nonumber
\end{align}
On the other hand, we can eliminate the terms of
$\beta^4,~\beta^3,~\beta^2$ by \eqref{R6} and \eqref{R9}, then we
have
\begin{align}\label{S3}
  &(R - 12 c) \big\{ ( 12 \alpha^2 + 22) \alpha \beta \lambda
   +( 12 \lambda^2 + 6 R - 72 c) \alpha^4 \\
  & + (26 \lambda^2 + 7 R - 68 c )\alpha^2
  + (14 \lambda^2 + R - 8 c) \big\}=0.\nonumber
\end{align}
Eliminating $\beta$ by \eqref{S1} and \eqref{S3}, we get
\begin{align}\label{S4}
   & 2(\alpha^2 + 1)(36\alpha^4 + 64\alpha^2 + 7)\lambda^2 \\
   & +( 12 \alpha^6 + 20 \alpha^4 + 9 \alpha^2 + 1)R     \nonumber  \\
   &  - ( 144 \alpha^6  + 208 \alpha^4 + 84 \alpha^2 + 8 )c=0.\nonumber
\end{align}
Similarly, eliminating $\lambda^2$ between \eqref{S2} and
\eqref{S4}, we finally obtain
\begin{align}\label{S5}
  (32  R - 504  c)\alpha^6 + (20  R - 420 c) \alpha^4 - (14  R - 30  c)\alpha^2 - (2 R - 9 c)=0.
\end{align}
 We conclude from \eqref{S5} that $\alpha$ must be a constant, which together with \eqref{S2} implies that $\lambda$ is a constant. This is a contradiction.

{\bf Case 3.2.} $\omega_{23}^4=\omega_{32}^4=\omega_{43}^2=0$. In
this case, it follows  from \eqref{F4}, \eqref{F5}, \eqref{F6} that
\begin{align}
\omega_{22}^1\omega_{33}^1=-\lambda_2\lambda_3-c,  \label{G1}\\
\omega_{22}^1\omega_{44}^1=-\lambda_2\lambda_4-c, \label{G2}\\
\omega_{33}^1\omega_{44}^1=-\lambda_3\lambda_4-c.\label{G3}
\end{align}
Taking the sum of \eqref{G1}, \eqref{G2} and \eqref{G3} gives
\begin{eqnarray}\label{G4}
\omega_{22}^1\omega_{33}^1+\omega_{22}^1\omega_{44}^1+\omega_{33}^1\omega_{44}^1
=-3c-\lambda_2\lambda_3-\lambda_2\lambda_4-\lambda_3\lambda_4.
\end{eqnarray}
Substituting \eqref{sum5} into \eqref{G4} gives
\begin{align}\label{G6}
\omega_{22}^1\omega_{33}^1+\omega_{22}^1\omega_{44}^1+\omega_{33}^1\omega_{44}^1
=3c-3\lambda^2-\frac 12 R.
\end{align}
From the expressions of $f_1$ and $f_2$, we have
\begin{align}\label{G7}
\omega_{22}^1\omega_{33}^1+\omega_{22}^1\omega_{44}^1+\omega_{33}^1\omega_{44}^1
=\frac 12 (f_1^2-f_2)=\frac 12 (T^2-T'-3\lambda^2+3c).
\end{align}
Combining \eqref{G6} with \eqref{G7} gives
\begin{equation}\label{G8}
 T^2 - T'+ 3\lambda^2-3c+R=0.
\end{equation}
Using \eqref{G1}--\eqref{G3} again, it follows that
\begin{equation}\label{G9}
(\omega_{22}^1\omega_{33}^1\omega_{44}^1)^2+(\lambda_2\lambda_3+c)(\lambda_2\lambda_4+c)(\lambda_3\lambda_4+c)=0.
\end{equation}
Let $K=\lambda_2\lambda_3\lambda_4$. By \eqref{sum1} and
\eqref{sum5}, we obtain
\begin{align}\label{G10}
&(\lambda_2\lambda_3+c)(\lambda_2\lambda_4+c)(\lambda_3\lambda_4+c)\\
=&(\lambda_2\lambda_3\lambda_4)^2+c^2(\lambda_2\lambda_3+\lambda_2\lambda_4+\lambda_2\lambda_4)
+c\lambda_2\lambda_3\lambda_4(\lambda_2+\lambda_3+\lambda_4)+c^3\nonumber\\
=&K^2+c^2\Big(\frac 12 R+3\lambda^2-6c\Big)-3c\lambda K+c^3\nonumber\\
=& K^2-3c\lambda K+3c^2\lambda^2 -5c^3+\frac 12 c^2 R.\nonumber
\end{align}
An easy computation shows that
\begin{align*}
f_1^3-f_3=&3\big\{(\omega_{22}^1)^2(\omega_{33}^1+\omega_{44}^1)
+(\omega_{33}^1)^2(\omega_{22}^1+\omega_{44}^1)\\
& +(\omega_{44}^1)^2(\omega_{22}^1+\omega_{33}^1)\big\}+6\omega_{22}^1\omega_{33}^1\omega_{44}^1\\
=&3\sum_{i=2}^4(\omega_{ii}^1)^2(f_1-\omega_{ii}^1)+6\omega_{22}^1\omega_{33}^1\omega_{44}^1\\
=&3f_1f_2-3f_3+6\omega_{22}^1\omega_{33}^1\omega_{44}^1,
\end{align*}
which together with the expressions of $f_1$, $f_2$ and $f_3$
implies that
\begin{align}\label{G11}
\omega_{22}^1\omega_{33}^1\omega_{44}^1=&\frac 16(f_1^3-3f_1f_2+2f_3)\\
=&\frac
16(T^3-11\lambda^2T+7cT+T''-3TT'+12\lambda\lambda').\nonumber
\end{align}
Substituting \eqref{G10} and \eqref{G11} back to \eqref{G9} yields
to
\begin{align}\label{G12}
&(T^3-11\lambda^2T+7cT+T''-3TT'+12\lambda\lambda')^2\\
&+36\Big(K^2-3c\lambda K+3c^2\lambda^2 -5c^3+\frac 12 c^2
R\Big)=0.\nonumber
\end{align}
Using \eqref{sum1} and \eqref{sum2} again, we find
\begin{align}\label{G13}
\lambda_3\lambda_4=&\frac 12 \Big\{(\lambda_3+\lambda_4)^2-(\lambda_3^2+\lambda_4^2)\Big\}\\
=&\frac 12 \Big\{(-3\lambda-\lambda_2)^2-(12c+3\lambda^2-R-\lambda_2^2) \Big\} \nonumber\\
=&3\lambda^2+3\lambda\lambda_2+\lambda_2^2-6c+\frac 12 R.\nonumber
\end{align}
Similarly, we can see that
\begin{align}
\lambda_2\lambda_4=&3\lambda^2+3\lambda\lambda_3+\lambda_3^2-6c+\frac 12 R,\label{G14}\\
\lambda_2\lambda_3=&3\lambda^2+3\lambda\lambda_4+\lambda_4^2-6c+\frac
12 R.\label{G15}
\end{align}
Applying \eqref{G13}--\eqref{G15} yields
\begin{align}\label{G16}
&\omega_{22}^1\lambda_3\lambda_4+\omega_{33}^1\lambda_2\lambda_4+\omega_{44}^1\lambda_2\lambda_3\\
=& (3\lambda^2-6c+\frac 12
R)(\omega_{22}^1+\omega_{33}^1+\omega_{44}^1)
+3\lambda(\lambda_2 \omega_{22}^1+\lambda_3 \omega_{33}^1+\lambda_4 \omega_{44}^1)\nonumber\\
&+(\lambda_2^2 \omega_{22}^1+\lambda_3^2 \omega_{33}^1+\lambda_4^2 \omega_{44}^1)\nonumber\\
=&(3\lambda^2-6c+\frac 12 R)f_1+3\lambda g_1+g_3.\nonumber
\end{align}
which together with the expression of $f_1$, $g_1$ and $g_3$ leads
to
\begin{align}\label{G17}
K':=&e_1(K)=e_1(\lambda_2)\lambda_3\lambda_4+e_1(\lambda_3)\lambda_2\lambda_4+e_1(\lambda_4)\lambda_2\lambda_3\\
=&\omega_{22}^1(\lambda_2-\lambda)\lambda_3\lambda_4
+\omega_{33}^1(\lambda_3-\lambda)\lambda_2\lambda_4
+\omega_{44}^1(\lambda_4-\lambda)\lambda_2\lambda_3\nonumber\\
=&\lambda_2\lambda_3\lambda_4(\omega_{22}^1+\omega_{33}^1+\omega_{44}^1)
-\lambda(\omega_{22}^1\lambda_3\lambda_4+\omega_{33}^1\lambda_2\lambda_4+\omega_{44}^1\lambda_2\lambda_3)\nonumber\\
=&Kf_1-  \lambda(3\lambda^2-6c+\frac 12 R)f_1 - 3\lambda^2 g_1 - \lambda g_3\nonumber\\
=&KT-7\lambda^3 T + 6 c\lambda T -\frac 12 \lambda R T
+9\lambda^2\lambda'.\nonumber
\end{align}
Differentiating \eqref{G12} with respect to $e_1$ and using
\eqref{G17} give rise to
\begin{align}\label{G18}
 &36 K^2 T + (162 T \lambda c + 324 \lambda^2 \lambda' - 18 R T \lambda - 252 T \lambda^3 - 54 \lambda' c)K \\
&+ 27 R T \lambda^2 c + 3 T^5  T' - 3 T^4  T'' - 22 T^4 \lambda \lambda' - 12 T^3  T'^2 - 44 T^3  T' \lambda^2 \nonumber\\
&+ 28 T^3  T' c + T^3  T''' + 12 T^3 \lambda \lambda'' + 12 T^3 \lambda'^2 + 12 T^2  T'  T'' \nonumber\\
&+ 102 T^2  T' \lambda \lambda' + 33 T^2  T'' \lambda^2 - 21 T^2  T'' c + 242 T^2 \lambda^3 \lambda' - 154 T^2 \lambda \lambda' c \nonumber\\
&+ 9 T  T'^3 + 66 T  T'^2 \lambda^2 - 42 T  T'^2 c - 3 T  T'  T''' + 121 T  T' \lambda^4\nonumber\\
& - 154 T  T' \lambda^2 c - 36 T  T' \lambda \lambda'' - 36 T  T' \lambda'^2 + 49 T  T' c^2 - 3 T  T''^2\nonumber\\
& - 58 T  T'' \lambda \lambda'- 11 T  T''' \lambda^2 + 7 T  T''' c + 378 T \lambda^4 c - 132 T \lambda^3 \lambda'' \nonumber\\
&- 396 T \lambda^2 \lambda'^2 - 324 T \lambda^2 c^2 + 84 T \lambda \lambda'' c + 84 T \lambda'^2 c - 3  T'^2  T'' \nonumber\\
&- 36  T'^2 \lambda \lambda' - 11  T'  T'' \lambda^2 + 7  T'  T'' c - 132  T' \lambda^3 \lambda' + 84  T' \lambda \lambda' c\nonumber\\
& +  T''  T''' + 12  T'' \lambda \lambda'' + 12  T'' \lambda'^2 + 12  T''' \lambda \lambda' - 486 \lambda^3 \lambda' c \nonumber\\
&+ 144 \lambda^2 \lambda' \lambda'' + 144 \lambda \lambda'^3 + 108 \lambda \lambda' c^2=0.\nonumber\\
\end{align}
Eliminating the terms of $K^2$ from \eqref{G12} and \eqref{G18}
shows that
\begin{equation}\label{G20}
  d_1K+d_2=0,
\end{equation}
where
\begin{align*}
  d_1= & 18 R T \lambda + 252 T \lambda^3 - 270 T \lambda c - 324 \lambda^2 \lambda' + 54 \lambda' c , \\
  d_2= & - 27 R T \lambda^2 c + 18 R T c^2 + T^7 - 9 T^5 T' - 22 T^5 \lambda^2 + 14 T^5 c \\
     & + 5 T^4 T'' + 46 T^4 \lambda \lambda' + 21 T^3 T'^2 + 110 T^3 T' \lambda^2 - 70 T^3 T' c \\
    &- T^3 T''' + 121 T^3 \lambda^4 - 154 T^3 \lambda^2 c - 12 T^3 \lambda \lambda'' - 12 T^3 \lambda'^2 \\
    &+ 49 T^3 c^2 - 18 T^2 T' T'' - 174 T^2 T' \lambda \lambda' - 55 T^2 T'' \lambda^2 + 35 T^2 T'' c \\
    &- 506 T^2 \lambda^3 \lambda' + 322 T^2 \lambda \lambda' c - 9 T T'^3 - 66 T T'^2 \lambda^2 + 42 T T'^2 c \\
    &+ 3 T T' T''' - 121 T T' \lambda^4 + 154 T T' \lambda^2 c + 36 T T' \lambda \lambda'' + 36 T T' \lambda'^2 \\
&- 49 T T' c^2 + 4 T T''^2 + 82 T T'' \lambda \lambda' + 11 T T''' \lambda^2 - 7 T T''' c \\
&- 378 T \lambda^4 c + 132 T \lambda^3 \lambda'' + 540 T \lambda^2 \lambda'^2 + 432 T \lambda^2 c^2 - 84 T \lambda \lambda'' c \\
&- 84 T \lambda'^2 c - 180 T c^3 + 3 T'^2 T'' + 36 T'^2 \lambda \lambda' + 11 T' T'' \lambda^2 - 7 T' T'' c \\
&+ 132 T' \lambda^3 \lambda' - 84 T' \lambda \lambda' c - T'' T''' - 12 T'' \lambda \lambda'' - 12 T'' \lambda'^2 \\
&- 12 T''' \lambda \lambda' + 486 \lambda^3 \lambda' c - 144
\lambda^2 \lambda' \lambda'' - 144 \lambda \lambda'^3 - 108 \lambda
\lambda' c^2
\end{align*}

When $d_1=0$, it follows immediately that $d_2=0$. As in the proof
of Lemma \ref{lemma-e234T}, we can certainly eliminate $T'''$,
$T''$, $T'$ and $T$ gradually, and obtain a polynomial equation
concerning $\lambda$ and its derivatives. In addition, similar
arguments applied to \eqref{G8} and $d_1=0$ gives another polynomial
equation concerning $\lambda$ and its derivatives. From these two
equations, we may eliminate all the derivatives of $\lambda$, and
get a non-trivial polynomial equation of $\lambda$ with constant
coefficients. Therefore, the mean curvature $H$ must be constant, a
contradiction.

Now we assume that $d_1\neq 0$. By use of \eqref{G20}, we may
eliminate $K$ in \eqref{G12} and have
\begin{align}\label{G21}
&(T^3-11\lambda^2T+7cT+T''-3TT'+12\lambda\lambda')^2\\
&+36\Big((\frac {d_2}{d_1})^2+3c\lambda \frac
{d_2}{d_1}+3c^2\lambda^2 -5c^3+\frac 12 c^2 R\Big)=0.\nonumber
\end{align}
Using an analysis similar to the above, we can eliminate $T'''$,
$T''$, $T'$ and $T$ between \eqref{G8} and \eqref{G21}, and deduce a
polynomial equation concerning $\lambda$ and its derivatives. In the
proof of Lemma \ref{lemma-e234T}, we find that if $a_1=0$, then
\eqref{LM4} shows that $a_2 T+a_3=0$, which together with \eqref{G8}
can deduce another polynomial equation concerning $\lambda$ and its
derivatives after eliminating the terms of $T'$ and $T$.  The rest
of the proof runs as before. If $a_1\neq 0$, the three equations
\eqref{LM4}, \eqref{G8} and \eqref{G21} can be handled in the same
way. We complete the proof of Theorem \ref{Theorem-2}.

\section{A final remark}

According to Theorem \ref{Theorem-2}, any non-CMC biconservative
hypersurface with constant scalar curvature in the 5-dimensional
space form $N^5(c)$ is a certain rotational hypersurface with two
principal curvatures verifying
\begin{eqnarray}\label{FU1}
-\lambda_1=\lambda_2=\lambda_3=\lambda_4.
\end{eqnarray}
A natural question is whether the converse is also true. In the
following, we show that any non-CMC rotational hypersurface
verifying \eqref{FU1} must be biconservative.

Let us recall the explicit parametric equation of the rotational
hypersurface in $\mathbb S^5$ from \cite{doCarmo-Dajczer1983}.
Consider the profile curve $\gamma$  by
\begin{eqnarray}
\gamma(s)=(h_1(s), 0, 0, h_2(s), h_3(s))\nonumber
\end{eqnarray}
for some smooth function $h_i$ $(i=1, 2, 3)$.  Then the
parametrization of the rotational hypersurface in $\mathbb S^5$ can
be written as
\begin{align}
f(s, t_1, t_2, t_{3})=\big(h_1(s)\varphi_1, h_1(s)\varphi_2,
h_1(s)\varphi_3, h_2(s), h_3(s)\big),\nonumber
\end{align}
where $\varphi(t_1, t_2, t_3)=(\varphi_1, \varphi_2, \varphi_3)$ is
an orthogonal parametrization of the unit sphere. Hence
$\varphi_i=\varphi_i(t_1, t_2, t_3)$ and $
\varphi_1^2+\varphi_2^2+\varphi_3^2=1$. Since the profile curve
$\gamma$ belongs to $\mathbb S^5$ and the parameter $s$ can be
chosen as its arc length, it follows that
\begin{align}
h_1^2+h_2^2+h_3^2=1\,\,{\rm and
}\,\,\,h_1'^2+h_2'^2+h_3'^2=1.\nonumber
\end{align}
It is straightforward to compute that
\begin{align}
&\frac{\partial f}{\partial s}=\big(h'_1\varphi_1,
h_1'\varphi_2, h_1'\varphi_3, h'_2, h'_3\big),\nonumber\\
&\frac{\partial f}{\partial t_i}=\big(h_1\frac{\partial
\varphi_1}{\partial t_i}, h_1\frac{\partial \varphi_2}{\partial
t_i}, h_1\frac{\partial \varphi_3}{\partial t_i}, 0, 0\big), \quad
i=2, 3, 4.\nonumber
\end{align}
Choosing a frame $\{\frac{\partial f}{\partial s}, \frac{\partial
f}{\partial t_1}, \frac{\partial f}{\partial t_2}, \frac{\partial
f}{\partial t_3}\}$, do Carmo-Dajczer \cite{doCarmo-Dajczer1983}
further showed that the principal curvatures $\lambda_i$ of $M^4$
are given by
\begin{align}
&\lambda_1=\frac{h_1''+h_1}{\sqrt{1-h_1^2-h_1'^2}},\label{FU2}\\
&\lambda_i=-\frac{\sqrt{1-h_1^2-h_1'^2}}{h_1},\quad i=2, 3,
4.\label{FU3}
\end{align}
Notice that the principal curvatures $\lambda_i$ are functions
depending only on the variable $s$.

Since the principal curvatures satisfy \eqref{FU1} on the rotational
hypesurfaces in Theorem \ref{Theorem-2}, from \eqref{FU2} and
\eqref{FU3} we get a second order ODE that
\begin{equation}
h_1(h_1''+h_1)=1-h_1^2-h_1'^2 \nonumber
\end{equation}
and the mean curvature $H$ is given by
\begin{equation}
H=-\frac{h_1''+h_1}{2\sqrt{1-h_1^2-h_1'^2}}. \nonumber
\end{equation}

It is straightforward to check that $\frac{\partial }{\partial s}$
is a principal direction and the corresponding principal curvature
is $\lambda_1 \,(=-2H)$. Hence we conclude that this rotation
hypersurface $M^4$ satisfies \eqref{biharmonic condition} and must
be biconservative. Similar arguments can be applied to the cases for
the rotational hypersurfaces with \eqref{FU1} in $N^5(c)$ for
$c\leq0$.

\medskip

\medskip\noindent
{\bf Acknowledgement:} {The authors are supported by Liaoning
Provincial Science and Technology Department Project (No.
2020-MS-340), the NSFC (No. 11801246, 12101083) and Natural Science
Foundation of Jiangsu Province (No. BK20210936)}




\begin{thebibliography}{20}

\bibitem{CMOP2014}
R. Caddeo, S. Montaldo, C. Oniciuc and P. Piu, {\em Surfaces in
three-dimensional space forms with divergence-free stress-bienergy
tensor}, Ann. Mat. Pura Appl. (4) 193 (2014), no. 2, 529--550.


\bibitem{Chern1970}
S. S. Chern, M. do Carmo and S. Kobayashi, {\em Minimal submanifolds
of a sphere with second fundamental form of constant length},
Functional Analysis and Related Fields, Springer, New York (1970),
59--75.

\bibitem{doCarmo-Dajczer1983}
M. do Carmo and M. Dajczer, {\em Rotation hypersurfaces in spaces of
constant curvature}, Trans. Amer. Math. Soc. 277 (1983), no. 2,
685--709.

\bibitem{Chang1993-JDG}
S. P. Chang, {\em On minimal hypersurfaces with constant scalar
curvatures in $\mathbb S^4$}, J. Differential Geom. 37 (1993),
523--534.

\bibitem{Chang1993-CAG}
S. P. Chang, {\em A closed hypersurface with constant scalar
curvature and constant mean curvature in $\mathbb S^4$ is
isoparametric}, Comm. Anal. Geom. 1 (1993), 71--100.


\bibitem{Cheng-Wan1993}
Q. M. Cheng and Q. R. Wan, {\em Hypersurfaces of space forms
$M^4(c)$ with constant mean curvature}, Geometry and global analysis
(Sendai, 1993), 437--442, Tohoku Univ., Sendai.


\bibitem{Eells}
J. Eells and L. Lemaire, {\em Selected topics in harmonic maps}, In
proceedings of the CBMS Regional Conference Series in Mathematics,
Providence, RI, USA, 31 December 1983.


\bibitem{fetcu3}
D. Fetcu, E. Loubeau and C. Oniciuc, {\em Bochner-Simons formulas
and the rigidity of biharmonic submanifolds}, J. Geom. Anal. 31
(2021), 1732--1755.

\bibitem{fetcu2}
D. Fetcu, S. Nistor and C. Oniciuc, {\em On biconservative surfaces
in 3-dimensional space forms}, Comm. Anal. Geom. 24 (2016),
1027--1045.

\bibitem{fetcu1}
D. Fetcu and C. Oniciuc, {\em Biharmonic and biconservative
hypersurfaces in space forms}, Contem. Math. (2022),
arXiv:2012.12476.

\bibitem{Fu2015Lorentz}
Y. Fu, {\em Explicit classification of biconservative surfaces in
Lorentz 3-space forms}, Ann. Mat. Pura Appl. (4) 194 (2015) no. 3,
805--822.

\bibitem{fu-hong2018}
Y. Fu and M. C. Hong, {\em Biharmonic hypersurfaces with constant
scalar curvature in space forms}, Pacific J. Math. 294 (2) (2018),
329--350.

\bibitem{fu-hong-zhan2020}
Y. Fu, M. C. Hong and X. Zhan, {\em On Chen's biharmonic conjecture
for hypersurfaces in $\mathbb R^5$}, Adv. Math. 383 (2021), 107697,
28pp.

\bibitem{guanli2021}
Z. D. Guan, H. Z. Li and L. Vrancken, {\em Four dimensional
biharmonic hypersurfaces in nonzero space forms have constant mean
curvature}, J. Geom. Phys. 160 (2021), 103984, 15 pp.

\bibitem{Hasanis-Vlachos1995H}
T. Hasanis and  T. Vlachos, {\em Hypersurfaces in $\mathbb{E}^4$
with harmonic mean curvature vector field}, Math. Nachr. 172 (1995),
145--169.

\bibitem{Liwei2007}
H. Z. Li and G. X. Wei, {\em Compact embedded rotation hypersurfaces
of $S^{n+1}$}, Bull. Braz. Math. Soc. (New Series) 38 (1) (2007),
81--99.


\bibitem{montaldo3}
S. Montaldo, C. Oniciuc and A. Ratto, {\em Biconservative surfaces},
J. Geom. Anal. 26 (2016), 313--329.

\bibitem{montaldo}
S. Montaldo and A. Pampano, {\em On the existence of closed
biconservative surfaces in space forms}, Comm. Anal. Geom. (2020),
to appear.


\bibitem{nistor2}
S. Nistor and C. Oniciuc, {\em Complete biconservative surfaces in
the hyperbolic space $\mathbb{H}^{3}$}, Nonlinear Anal. 198 (2020),
111860, 29 pp.

\bibitem{nistor3}
S. Nistor and C. Oniciuc, {\em On the uniqueness of complete
biconservative surfaces in $\mathbb{R}^{3}$}, Proc. Amer. Math. Soc.
147 (2019), 1231--1245.


\bibitem{Peng-Terng1993}
C. K. Peng and C. L. Terng, {\em The scalar curvature of minimal
hypersurfaces in spheres}, Math. Ann. 266 (1983), 105--113.


\bibitem{Lawson1969}
B. Lawson, {\em Local rigidity theorems for minimal hypersurfaces},
Ann. of Math. (2) 89 (1969), 187--197.

\bibitem{loubeau2008}
E. Loubeau, S. Montaldo, C. Oniciuc, {\em The stress-energy tensor
for biharmonic maps}, Math. Z. 259 (2008), 503--524.

\bibitem{Ou-Chen-book2020}
Y. L. Ou and B. Y. Chen, {\em Biharmonic submanifolds and biharmonic
maps in Riemannian geometry}, World Scientific Publishing,
Hackensack, NJ, 2020.

\bibitem{Simons1968}
J. Simons, {\em Minimal varieties in riemannian manifolds}, Ann. of
Math. (2) 88 (1968), 62--105.

\bibitem{verstraelen1986}
L. Verstraelen, {\em Sectional curvature of minimal submanifolds},
In: Proceedings Workshop on Differential Geometry, Univ. Southampton
(1986), 48--62.

\bibitem{Tang-Wei-Yan2020}
Z. Z. Tang, D. Y. Wei and W. J. Yan, {\em A suffcient condition for
a hypersurface to be isoparametric}, Tohoku Math. J. (2) 72 (2020)
no. 4, 493--505.

\bibitem{Tang-Yan2020}
Z. Z. Tang and W. J. Yan, {\em On the Chern conjecture for
isoparametric hypersurfaces}, arXiv:2001.10134v1.


\bibitem{Turgay-Upadhyay2019}
N. C. Turgay and A. Upadhyay, {\em On biconservative hypersurfaces
in 4-dimensional Riemannian space forms}, Math. Nachr. 292 (2019)
no. 4, 905--921.


\end{thebibliography}
\end{document}